\renewcommand{\epsilon}{\varepsilon}
\newcommand{\newsection}[1] 
{\subsection{#1}\setcounter{theorem}{0} \setcounter{equation}{0} 
\par\noindent}
\newtheorem{theorem}{Theorem}
\newtheorem{lemma}[theorem]{Lemma}
\newtheorem{corr}[theorem]{Corollary}
\newtheorem{proposition}[theorem]{Proposition}
\newtheorem{deff}[theorem]{Definition}
\newcommand{\bth}{\begin{theorem}}
\newcommand{\ble}{\begin{lemma}}
\newcommand{\bcor}{\begin{corr}}
\newcommand{\bdeff}{\begin{deff}}
\newcommand{\bprop}{\begin{proposition}}
\newcommand{\ele}{\end{lemma}}
\newcommand{\ecor}{\end{corr}}
\newcommand{\edeff}{\end{deff}}
\newcommand{\eprop}{\end{proposition}}
\newcommand{\la}{\lambda}
\newcommand{\e}{\varepsilon}
\renewcommand{\Pi}{\varPi}
\renewcommand{\epsilon}{\varepsilon}
\newcommand{\R}{{\mathbb R}}
\newcommand{\1}{\mathbb{1}}
\newcommand{\diag}{\Upsilon^{\text{diag}}}
\newcommand{\tdiag}{\tilde\Upsilon^{\text{diag}}}
\newcommand{\far}{\Upsilon^{\text{far}}}
\newcommand{\Atn}{A^{\theta_0}_\nu}
\newcommand{\tAtn}{\tilde A^{\theta_0}_{\nu}}
\newcommand{\Atnt}{A^{\theta_0}_{\tilde\nu}}
\newcommand{\tAtnt}{\tilde A^{\theta_0}_{\tilde \nu}}
\newcommand{\xid}{\Xi_{\theta_0}}
\begin{document}

\title[Strichartz estimates  on nonpostively curved curved manifolds]
{Strichartz estimates for the Schr\"odinger equation on  compact manifolds with  nonpositive sectional curvature}

\thanks{The first author was supported in part by an AMS-Simons travel grant. The second author was supported in part by the NSF (DMS-1665373 and DMS-2348996)
and a Simons Fellowship. This research was partly carried out while the first
author was at the University of Maryland.}

\keywords{Schr\"odinger's equation, curvature, Schr\"odinger curves}
\subjclass[2010]{58J50, 35P15}

\author{Xiaoqi Huang}
\address[X.H.]{Department of Mathematics, Louisiana State University, Baton Rouge, LA 70808}
\email{xhuang49@lsu.edu}
\author{Christopher D. Sogge}
\address[C.D.S.]{Department of Mathematics,  Johns Hopkins University,
Baltimore, MD 21218}
\email{sogge@jhu.edu}

\begin{abstract}
We obtain  improved Strichartz estimates for solutions of the Schr\"odinger equation on  compact manifolds with  nonpositive 
sectional curvatures which are related to the
classical universal results of Burq, G\'erard and Tzvetkov~\cite{bgtmanifold}. More explicitly, we are able refine the arguments in the recent work of Blair and the authors \cite{blair2023strichartz}  to obtain  no-loss $L^p_tL^{q}_{x}$-estimates on intervals of length $\log \la\cdot \la^{-1} $
 for all {\em admissible} pairs $(p,q)$ when the initial data have frequencies
 comparable to $\la$, which, given the role of the Ehrenfest time, is the natural analog in this setting of the universal results
in \cite{bgtmanifold}.   We achieve this log-gain over the universal estimates by applying the Keel-Tao theorem along with improved global kernel estimates for microlocalized operators which
exploit the geometric assumptions.
\end{abstract}

\maketitle

\centerline{ \bf In memoriam: {\em Steve Zelditch (1953-2022)}}

\newsection{Introduction}

The purpose of this paper is to improve  the Strichartz estimates for the 
Schr\"odinger equation on  negatively curved compact manifolds of Blair and the authors \cite{blair2023strichartz}, while at the same time simplifying the arguments there.

Let us first recall the universal estimates
of Burq, G\'erard and Tzvetkov~\cite{bgtmanifold}.
If $(M^d,g)$ is a compact Riemannian manifold of
dimension $d\ge2$, then the main estimate
in \cite{bgtmanifold} is that if $\Delta_g$ is
the associated Laplace-Beltrami operator and
\begin{equation}\label{00.1}
u(x,t)=\bigl(e^{-it\Delta_g}f\bigr)(x)
\end{equation}
is the solution of the Schr\"odinger equation
on $M^d\times \R$,
\begin{equation}\label{00.2}
i \partial_tu(x,t)=\Delta_gu(x,t), \quad u(x,0)=f(x),
\end{equation}
then if we define 
$$\|u\|_{L^p_tL^q_x(M^{d}\times [0,1])}=\bigr(\, \int_0^1 \, \|u(\, \cdot\, , t)\|_{L^q_x(M^{d})}^p \, dt\,\bigr)^{1/p},$$
one has the mixed-norm Strichartz estimates
\begin{equation}\label{00.3}
\|u\|_{L^p_tL^q_x(M^d\times [0,1])}
\lesssim \|f\|_{H^{1/p}(M^d)}
\end{equation}
for all {\em admissible} pairs $(p,q)$.  By the latter
we mean, as in Keel and Tao~\cite{KT},
\begin{equation}\label{00.4}
d(\tfrac12-\tfrac1q)=\tfrac2p\, \, \,
\text{and } \, \, 2< q\le \tfrac{2d}{d-2} \, \,
\text{if } \, d\ge 3, \, \, \, 
\text{or } \, 2< q<\infty \, \, 
\text{if } \, \, d=2.
\end{equation}
Also, in \eqref{00.3} 
$H^\mu$ denotes the
standard Sobolev space
\begin{equation}\label{00.5}
\|f\|_{H^\mu(M^d)}=
\bigl\| \, (I+P)^\mu f\, \bigr\|_{L^2(M^d)}, 
\quad \text{with } \, \, P=\sqrt{-\Delta_g},
\end{equation}
and ``$\lesssim$'' in \eqref{00.3} and, in what
follows, denotes an inequality with an implicit,
but unstated, constant $C$ which can change at each occurrence.

Note that if $e_\la$ is an eigenfunction of
$P$ with eigenvalue $\la$, i.e.,
\begin{equation}\label{00.6}
-\Delta_g e_\la = \la^2 e_\la,
\end{equation}
then
\begin{equation}\label{00.7}
u(x,t)=e^{it\la^2}e_\la(x)
\end{equation}
solves \eqref{00.2} with initial data $f=e_\la$. Thus, 
unlike  Euclidean space, or  hyperbolic space, one cannot replace $[0,1]$ by
$\R$ in \eqref{00.3} due to the existence of eigenfunctions.
Also, for the endpoint Strichartz estimates where
$p=2$ and $q=\tfrac{2d}{d-2}$ with $d\ge3$, the derivative loss of $\frac12$ in \eqref{00.3} can not be improved on the standard round sphere $S^d$, by taking the initial data $f$ to be zonal eigenfunctions.  See, e.g.,  \cite{bgtmanifold} and \cite{blair2023strichartz} for more details. 

However, on compact manifolds with other types of geometries, \eqref{00.3} can be improved, see e.g., \cite{BoDe}, \cite{DGG} and \cite{DGGM} for the torus case and  \cite{blair2023strichartz} for general compact manifolds with non-positive sectional curvatures. Also, for {\em admissible} pairs $(p,q)$ other than $(2, \frac{2d}{d-2})$, one can have better estimates than \eqref{00.3} on the sphere as well using the specific
arithmetic properties of the distinct eigenvalues of the Laplacian on $S^d$.  See, e.g., Theorem 4 in \cite{bgtmanifold}.

As in \cite{blair2023strichartz}, to align with the numerology in related earlier results
involving eigenfunction and spectral projection estimates,  in what follows,
we shall always take $d=n-1$. 
Our main result which improves on estimates in \cite{blair2023strichartz} then is the following

\begin{theorem}\label{nonposthm}  Let $M^{n-1}$ be a
$d=n-1\ge 2$ dimensional compact manifold all of whose
sectional curvatures are nonpositive.  Then for all {\em admissible} pairs $(p,q)$,
\begin{equation}\label{00.10}
\|u\|_{L^p_tL^q_x(M^{n-1}\times [0,1])}\lesssim \bigl\|(I+P)^{1/p} \, (\log(2I+P))^{-\frac{1}{p}}f \bigr\|_{L^2(M^{n-1})}.
\end{equation}
\end{theorem}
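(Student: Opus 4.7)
The plan is to combine a Littlewood--Paley reduction with the Keel--Tao theorem, using an improved kernel bound for the frequency-localized propagator that is valid up to the Ehrenfest time $c_0\log\la/\la$.

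First, by a standard Littlewood--Paley decomposition and Minkowski's inequality (both $p,q\ge 2$ under the admissibility condition \eqref{00.4}), the theorem reduces to proving, for each dyadic $\la\ge 2$ and each $f$ with frequencies $\sim\la$,
\begin{equation*}
\|e^{-it\Delta_g}f\|_{L^p_tL^q_x([0,1]\times M^{n-1})} \lesssim \la^{1/p}(\log\la)^{-1/p}\|f\|_{L^2(M^{n-1})}.
\end{equation*}
Let $\chi_\la$ denote a smooth spectral cutoff to $P\in[\la/2,2\la]$, and write $U_\la(t)=e^{-it\Delta_g}\chi_\la$. The trivial energy estimate $\|U_\la(t)\|_{L^2\to L^2}\le 1$ holds for all $t$. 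The key ingredient, replacing the weaker dispersive bound used in \cite{blair2023strichartz}, is a global kernel estimate
\begin{equation*}
\|U_\la(t)\|_{L^1\to L^\infty} \;\lesssim\; \la^{n-1}(1+\la|t|)^{-(n-1)/2}, \qquad |t|\le c_0\la^{-1}\log\la,
\end{equation*}
for a small $c_0>0$. This is established by lifting the kernel of $U_\la(t)$ to the universal cover $(\widetilde M,\tilde g)$ via the Hadamard parametrix, writing it as a sum over deck transformations, and applying stationary phase on each term. The nonpositivity of the sectional curvatures is used through Cartan--Hadamard (so $\exp$ is a diffeomorphism and no conjugate points arise), Günther volume comparison (to bound the density of deck-transformation orbits), and the fact that, up to the Ehrenfest time, microlocalized wave packets remain well separated so that phases stay nondegenerate in the relevant spatial regime.

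Granting this kernel estimate, the Keel--Tao theorem \cite{KT} applies verbatim on any interval $I$ of length $|I|\le c_0\la^{-1}\log\la$: from the $L^2\to L^2$ bound and the dispersive bound $\|U_\la(t)U_\la(s)^*\|_{L^1\to L^\infty}\lesssim |t-s|^{-(n-1)/2}$ (for $\la^{-1}\le |t-s|\le c_0\la^{-1}\log\la$, with the trivial bound $\la^{n-1}$ for shorter times), one obtains
\begin{equation*}
\|U_\la(t)f\|_{L^p_tL^q_x(I\times M^{n-1})} \;\lesssim\; \|f\|_{L^2}
\end{equation*}
for every admissible pair $(p,q)$. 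Now cover $[0,1]$ by $N\simeq \la(\log\la)^{-1}$ disjoint intervals $I_k$ of length $c_0\la^{-1}\log\la$. Using the unitarity of $e^{-it\Delta_g}$ on $L^2$ and summing in $\ell^p$,
\begin{equation*}
\|U_\la(t)f\|_{L^p_tL^q_x([0,1]\times M)}^p \;\lesssim\; \sum_{k=1}^{N}\|U_\la(t)e^{-it_k\Delta_g}f\|_{L^p_tL^q_x(I_k\times M)}^p \;\lesssim\; N\|f\|_{L^2}^p,
\end{equation*}
which yields the desired $\la^{1/p}(\log\la)^{-1/p}$ bound. Reassembling the frequency pieces via the Littlewood--Paley square function gives \eqref{00.10}.

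The main obstacle is the kernel estimate at the Ehrenfest time: one must propagate the short-time $|t|^{-(n-1)/2}$ dispersion for the microlocalized propagator uniformly up to $|t|\sim \la^{-1}\log\la$, which is precisely the scale at which the deck-transformation sum on the universal cover becomes delicate (the number of orbit points inside a geodesic ball of radius $\la|t|$ grows exponentially, which is only barely compensated by the geometric decay coming from the stationary-phase analysis). Controlling this sum uniformly is where the nonpositive curvature hypothesis is essential, and it is here that the refinement over \cite{blair2023strichartz} must be made.
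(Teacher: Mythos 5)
There is a genuine gap, and it sits exactly where you placed your ``main obstacle'': the global dispersive estimate $\|U_\la(t)\|_{L^1\to L^\infty}\lesssim \la^{n-1}(1+\la|t|)^{-(n-1)/2}$ for $|t|\le c_0\la^{-1}\log\la$ is not available, and the paper never proves it. Lifting to the universal cover via Cartan--Hadamard and the Hadamard parametrix and summing over deck transformations gives only the bound \eqref{k}, i.e. $\la^{(n-1)/2}|t-s|^{-(n-1)/2}\exp(C_M|t-s|)$ in the rescaled time variable: the number of deck transformations within distance $R$ grows like $e^{CR}$ in nonpositive curvature, and nothing in the stationary-phase analysis of the individual terms compensates for this. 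Over the rescaled window $|t-s|\le c_0\log\la$ the exponential factor is a genuine power loss $\la^{\e_1}$, so feeding this kernel bound into Keel--Tao cannot produce a lossless Strichartz estimate. Removing the exponential factor is only known for the \emph{directionally microlocalized} kernels $A^{\theta_0}_\nu e^{-i(t-s)\la^{-1}\Delta_g}(A^{\theta_0}_{\nu'})^*$ (via Toponogov's comparison theorem, estimate \eqref{22.65np}), and one cannot simply sum those bounds over the $\approx\la^{(2n-3)\e_0}$ values of $\nu$ to recover your claimed estimate for the full propagator. Indeed, your claimed bound would, by the Bernstein argument following \eqref{bern}, yield improvements of B\'erard-type spectral projection bounds that have been open for decades.

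Because of this, the argument has to be structured quite differently, and that structure is the actual content of the proof. The paper splits $M^{n-1}\times[0,T]$ into a set $A_+$ of large values of $|\tilde S_\la f|$, where the lossy global kernel bound \eqref{k} suffices (the $\la^{\e_1}$ loss is absorbed by the height threshold $\la^{(n-1)/4+\e_1}$ via a $TT^*$ and duality argument), and a set $A_-$ of small values, where one runs a Whitney decomposition in the microlocal parameters and uses Lee's bilinear oscillatory integral estimates for the off-diagonal pairs. Only for the diagonal term does a Keel--Tao argument appear, and it is applied not to $e^{-it\la^{-1}\Delta_g}\beta(P/\la)$ but to the vector-valued operator $Wf(x,t,\nu)=\eta(t/T)A^{\theta_0}_\nu e^{-it\la^{-1}\Delta_g}f$ with the mixed $\ell^q_\nu L^q_x$ norm, precisely so that the lossless microlocalized kernel estimate \eqref{22.65np} can be used. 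Your reduction to dyadic pieces and the final summation over $\approx\la/\log\la$ intervals match the paper, but the central analytic steps --- the height decomposition, the bilinear/Whitney analysis on $A_-$, and the vector-valued Keel--Tao application --- are missing, and without them the approach as written does not close.
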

If we let the initial data $f=e_\la$, then by \eqref{00.7} and the special case $(p,q)=(2,\frac{2d}{d-2})$ of  \eqref{00.10}, we have  
\begin{equation}\label{eigengain}
    \|e_\la\|_{L^{\frac{2d}{d-2}}(M^d)}
\lesssim \la^{1/2}\, (\log\la)^{-1/2}\|e_\la\|_{L^2
(M^d)},
\end{equation}
which gives a $(\log\la)^{-1/2}$ gain compared with the universal eigenfunction estimates of the second author \cite{sogge88}. 
For manifolds with nonpositive curvature, \eqref{eigengain} was first proved by 
 Hassell and Tacy \cite{HassellTacy}, with similar results for all $q>\tfrac{2(d+1)}{d-1}$. Thus \eqref{00.10} is a natural generalization of \eqref{eigengain} for solutions of Schr\"odinger equation and also provides a novel approach to  get improved eigenfunction estimates.

By the Littlewood-Paley theory,
we may reduce \eqref{00.10} to proving certain dyadic estimates. More explicitly,  let us fix a Littlewood-Paley bump function
$\beta$ satisfying
\begin{equation}\label{00.11}
\beta \in C^\infty_0((1/2,2)) \quad
\text{and } \, \, 1=\sum_{k=-\infty}^\infty 
\beta(2^{-k}s), \, \, s>0.
\end{equation}
Then, if we set $\beta_0(s)=1-\sum_{k=1}^\infty
\beta(2^{-k}s)\in C^\infty_0(\R_+)$ and
$\beta_k(s)=\beta(2^{-k}s)$, $k=1,2,\dots$, we
have (see, e.g., \cite{SFIO2})
\begin{equation}\label{00.12}
\|h\|_{L^q(M^{n-1})}\approx
\bigl\| \, (\, \sum_{k=0}^\infty |\beta_k(P)h|^2\, 
)^{1/2} \, \bigr\|_{L^q(M^{n-1})}, \, \, \,
1<q<\infty.
\end{equation} 
Trivially, $\|\beta_0(P)e^{-it\Delta_g}\|_{L^2(M^{n-1})
\to {L^p_tL^q_x(M^{n-1}\times [0,1])}}=O(1)$, and, similarly such results
where $k=0$ is replaced by a small fixed $k\in {\mathbb N}$ are
also standard.
So, as noted
in \cite{bgtmanifold}, one can use \eqref{00.12} and
Minkowski's inequality to see that
\eqref{00.3} follows
from the uniform bounds
\begin{equation}\label{00.3'}
\| e^{-it\Delta_g}\beta(P/\la)f\|_{L^p_tL^q_x(M^{n-1}\times [0,1])}\le C\la^{\frac1{p}}\, \|f\|_{L^2(M^{n-1})}, \quad \la \gg 1.
\end{equation}
Burq, G\'erard and Tzvetkov proved this estimate in
\cite{bgtmanifold} by showing that one always has the 
following uniform dyadic estimates over very small intervals:
\begin{equation}\label{00.3''}
\|e^{-it\Delta_g} \beta(P/\la)f\|_{L^p_tL^q_x(M^{n-1}\times [0,\la^{-1}])}\le C\, \|f\|_{L^2(M^{n-1})}, \quad \la \gg 1.
\end{equation}
It is not hard to see that 
 \eqref{00.3''}  yields 
\eqref{00.3'}, since one can write $[0,1]$ as the
union of $\approx \la$ intervals of length $\la^{-1}$
and thus obtain \eqref{00.3'} by adding up the
uniform estimates on each of these subintervals
that \eqref{00.3''} affords. Also, the  bounds in \eqref{00.3''} cannot be improved
on {\em any} manifold by taking $f(x)=f_\la(x)=\beta(P/\la)(x,x_0)$ with $\beta(P/\la)(x,y)$ being the kernel of the 
Littlewood-Paley operators. As a result, to obtain
improvements such as those in \eqref{00.10},
larger time intervals must be used.   Specifically, we shall show that if
$M^{n-1}$ is as in Theorem~\ref{nonposthm} then
we have the uniform bounds
\begin{equation}\label{00.10'}
\| e^{-it\Delta_g} \beta(P/\la) f\|_{L^p_tL^q_x(M^{n-1}
\times [0, \, \log\la\cdot \la^{-1}])}\le C \, \|f\|_{L^2(M^{n-1})}, \quad \la\gg 1,
\end{equation}
which is a  natural extension of the uniform
small-time scale estimates \eqref{00.3''}, and  perhaps the largest one can hope to obtain
such estimates in the geometry we are focusing on  using available techniques, due to
the role of the Ehrenfest time.
Also, by the above counting
arguments, one obtains \eqref{00.10} from
\eqref{00.10'} since $[0,1]$ can be covered
by $\approx \la/\log\la$ intervals of length
$\log\la\cdot \la^{-1}$.

The bound in \eqref{00.10'} improves the  result in \cite{blair2023strichartz} in two aspects, first, it removes
the power of $\log\la$ loss there, and second, it includes all  {\em admissible} pairs $(p,q)$. The main ideas in the proof of \eqref{00.10'} are similar to those in \cite{blair2023strichartz}, both of which involve a height decomposition. The larger values can be dealt with using kernel estimates for certain {\em global} operators, while the smaller values involves the use of a  Whitney type decomposition and  bilinear oscillatory integral
estimates of Lee \cite{LeeBilinear}. Compared to \cite{blair2023strichartz},  we are able to improve 
 the arguments used for the {\em diagonal} term in the  Whitney decomposition, which previously relied on a microlocalized version of improved Strichartz estimates. We avoid this by applying the abstract theorem of Keel-Tao \cite{KT} to some 
new Banach space with $L^p$ norm depending on the $\ell^p$ norm of different microlocalized pieces. The analogous dispersive estimate adapted to the new space, which is necessary to apply the Keel-Tao theorem, is obtained by using the kernel estimates involving the microlocalized operators as proved in \cite{blair2023strichartz}.

If  $\chi_{[\la,\la+(\log\la)^{-1}]}$ denotes the spectral projection operator for $P=\sqrt{-\Delta_g}$ associated with the interval $[\la,\la+(\log\la)^{-1}]$, then a simple consequence
of \eqref{00.10'} is that for $q_e=\tfrac{2d}{d-2}$, $d=n-1\ge 3$,
\begin{equation}\label{bern}
\| \chi_{[\la,\la+(\log\la)^{-1}]}f\|_{L^{q_e}(M^d)} \le C\la (\log\la)^{-1/2}\|f\|_{L^2(M^d)},
\end{equation}
assuming that $M^d$ has nonpositive sectional curvatures.
This in turn, implies the natural sharp spectral projection estimates for these operators for all $q\ge q_e$ under these curvature assumptions, which are due to B\'erard~\cite{Berard} for
$q=\infty$  and later generalized by Hassell and Tacy~\cite{HassellTacy} to $q>\tfrac{2(d+1)}{d-1}$ and $d\ge2$.

To prove \eqref{bern} fix $a\in {\mathcal S}({\mathbb R})$ satisfying $a(0)=1$ and
$\Hat a(\tau)=0$ if $|\tau|\ge1$.  Then for $q_e$ and $d=n-1$ as in \eqref{bern}, by Bernstein's inequality and \eqref{00.10'},
\begin{align*}
\| &\chi_{[\la,\la+(\log\la)^{-1}]}f\|_{L^{q_e}(M^{n-1})}
\\
&\le \| a(\la(\log\la)^{-1}t) \, e^{-it\Delta_g}\beta(P/\la) \chi_{[\la,\la+(\log\la)^{-1}]}f\|_{L^\infty_t L^{q_e}_x(M^d\times \R)}
\\
&\le C\la^{\frac12}(\log\la)^{-\frac12}
\| a(\la(\log\la)^{-1}t) \, e^{-it\Delta_g}\beta(P/\la) \chi_{[\la,\la+(\log\la)^{-1}]}f\|_{L^2_t L^{q_e}_x(M^d\times \R)}
\\
&\le C\la^{\frac12}(\log\la)^{-\frac12} \|f\|_{L^2(M^{n-1})}.
\end{align*}
Note that this argument implies that any improvements of \eqref{00.10'} to include uniform bounds for intervals of size
$\delta(\la)$ with $\delta(\la)/[\la^{-1}\cdot \log\la ]\nearrow \infty$ would imply sharp spectral projection estimates associated
with spectral windows of length $o((\log\la)^{-1})$ for $q\ge q_e$ under the assumption of nonpositive curvature, which, even for the case
of $q=\infty$, seems difficult.  There has been no such improvement of the sup-norm estimates of B\'erard~\cite{Berard} in the last five decades,
and progress of this nature has been elusive due to the role of the Ehrenfest time.


In the case of flat tori, if we fix $q=\frac{2(n+1)}{n-1}$, recall the near optimal results of
Bourgain and Demeter~\cite{BoDe} for $n\ge 4$ and Bourgain~\cite{bourgain1993fourier} for $n=2, 3$
\begin{equation}\label{00.10''}
\| e^{-it\Delta_g} \beta(P/\la) f\|_{L^{q}_{t,x}(M^{n-1}
\times [0, \,1])}\le C_\e\la^\e \, \|f\|_{L^2(M^{n-1})}, \, \la\gg 1, \,\,\forall\e>0.
\end{equation}
The proof of \eqref{00.10''} is based on number-theoretic methods for $n=2,3$, while for $n\ge 4$, the estimates were derived via $\ell^2$-decoupling methods. More recently, for $n=3$, Herr and Kwak \cite{herr2023strichartz} obtained a
lossless version of \eqref{00.10''} for any time intervals of length $(\log\la)^{-1}$ by using a new method based on incidence geometry, and the length of the interval can not be extended further by testing against $f=\sum_{k\in [-\la, \la]^2\cap \mathbb{Z}^2}e^{ik\cdot x}$. See e.g, \cite{bourgain1993fourier} for more details.

By using the  favorable properties of the universal cover $\R^n$ and the fact that the types of microlocal cutoffs  we
shall employ commute well with  Schr\"odinger
propagators,
it seems likely that we shall be able to  modify the arguments in the proof of Theorem 1.1 to obtain no loss  dyadic estimates on tori
${\mathbb T}^n$
 on intervals
of length $\la^{-1+\delta_n}$ for some $0<\delta_n<1$, which would be a natural generalization of the results in \cite{herr2023strichartz} for all  {\em admissible} pairs $(p,q)$ in any dimension $n\ge 3$. We hope to explore this problem as well as possible improved Strichartz estimates for
spheres in a later work.


This paper is organized as follows.  In the next section
we present the main arguments that allow us to prove
 Theorem~\ref{nonposthm}. The proof requires local bilinear arguments
from harmonic analysis, which are discussed in Section 3.

\newsection{Main arguments}


To start, let $\beta$ be the Littlewood-Paley bump function in \eqref{00.11}, and also fix
\begin{equation}\label{22.1}
\eta\in C^\infty_0((-1,1)) \quad
\text{with } \, \, \eta(t)=1, \, \, \, |t| \le 1/2.
\end{equation}
We then shall consider the dyadic time-localized dilated Schr\"odinger operators
\begin{equation}\label{22.2} 
S_\la =\eta(t/T) e^{-it\la^{-1}\Delta_g} \beta(P/\la),
\end{equation}
and claim that the estimates in Theorems \ref{nonposthm} is a consequence of the following.

\begin{proposition}\label{mainprop}  Let $M^d$, $d=n-1\ge2$ be a fixed compact manifold all of whose
sectional curvatures are nonpositive, and $(p,q)$ be as in \eqref{00.4}.  Then we can fix $c_0>0$ so that for large $\la \gg 1$ we have the
uniform bounds
\begin{equation}\label{22.3}
\| S_\la f\|_{L^p_tL^q_x(M^{n-1}\times [0,T])}\le C\la^{\frac1{p}}  \, \|f\|_{L^2(M^{n-1})}, \quad
\text{if } \, \, T=c_0\log\la.
\end{equation}
\end{proposition}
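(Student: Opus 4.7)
The plan is to reduce \eqref{22.3} by $TT^*$-duality to a bilinear estimate for the kernel of $S_\la S_\la^*$ on $[0,T]^2\times M^{n-1}\times M^{n-1}$, and to analyze this kernel via a height decomposition followed by a Whitney-type decomposition in $(t,s)$. The Ehrenfest-time obstruction manifests in the fact that on time scales $T=c_0\log\la$ no uniform pointwise dispersive bound $\la^{d/2}|t-s|^{-d/2}$ can hold for the full kernel of $S_\la$, so Keel-Tao's theorem cannot be applied to the scalar propagator directly.

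First, I would split the kernel of $S_\la S_\la^*$ by the size of its modulus. On the large-value piece $\Omega_{high}$, I would invoke the improved global kernel estimates for microlocalized propagators established in \cite{blair2023strichartz}; these estimates, obtained by lifting the problem to the universal cover via the Cartan-Hadamard theorem and exploiting the fact that individual microlocal tubes avoid the exponential divergence of the geodesic flow, are strong enough that a direct $L^{p'}_tL^{q'}_x\to L^p_tL^q_x$ bound for this contribution follows from Schur/Young's inequality in space-time.

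On the complementary (small-kernel) set I would perform a Whitney decomposition of $[0,T]^2$ relative to the diagonal $\{t=s\}$. The off-diagonal pieces, where the Whitney separation guarantees the phase-space and spatial separation needed, are controlled by Lee's bilinear oscillatory integral estimates \cite{LeeBilinear} in the spirit of \cite{blair2023strichartz}. The essential novelty lies in the treatment of the diagonal term. Rather than invoking a microlocalized improved Strichartz estimate as in \cite{blair2023strichartz}, I would apply the abstract theorem of Keel-Tao in a new Banach space whose norm aggregates $\ell^p$-norms of microlocalized pieces $\qn S_\la f$ across a phase-space partition $\{\qn\}$ adapted to the time scale $T$. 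The two abstract inputs then become: (a) an $L^2$-energy estimate, immediate from almost-orthogonality of $\{\qn\}$; and (b) a vector-valued dispersive bound of the form
\[ \bigl\| \{\qn S_\la(t) S_\la(s)^* Q_\nu^* g_\nu\}_\nu \bigr\|_{\ell^\infty_\nu L^\infty_x} \lesssim \la^{d/2}|t-s|^{-d/2}\, \bigl\| \{g_\nu\} \bigr\|_{\ell^1_\nu L^1_x}, \]
which follows from the microlocalized kernel bounds of \cite{blair2023strichartz} on the diagonal time regime. Having obtained Strichartz in this vector-valued norm, the scalar bound \eqref{22.3} would be recovered via almost-orthogonality of $\{\qn\}$ together with a square-function argument in $L^q_x$.

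I expect the principal obstacle to be step (b): verifying that the microlocalized kernel bounds assemble into a genuine dispersive estimate on the full near-diagonal range of $|t-s|$ up to the Ehrenfest cut-off $T=c_0\log\la$ without loss, so that the Keel-Tao machinery applies cleanly. A secondary subtlety is the selection of the microlocal partition $\{\qn\}$: the cutoffs must commute sufficiently well with $e^{-it\la^{-1}\Delta_g}$ over $[0,T]$ for the $\ell^p_\nu$ structure to be preserved under the flow, while being fine enough to implement Lee's bilinear separation condition on the off-diagonal Whitney pieces and coarse enough that almost-orthogonality controls the final passage from the $\ell^p_\nu$-valued Strichartz bound back to the scalar $L^p_tL^q_x$ estimate.
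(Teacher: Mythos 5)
Your proposal correctly identifies several of the paper's essential ingredients—the Ehrenfest-time obstruction to applying Keel--Tao directly, the need for microlocalized kernel estimates without the $e^{CT}$ loss, and, most importantly, the idea of running Keel--Tao in a vector-valued space whose norm carries an $\ell^p_\nu$ structure over a microlocal partition. This last point is indeed the central novelty of the paper relative to \cite{blair2023strichartz}, and you are right that the dispersive input \eqref{22.63'} comes from the microlocalized bounds \eqref{22.65np} which, thanks to Toponogov, carry no exponential-in-$T$ factor.

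However, there are two genuine gaps in the way you propose to assemble these ingredients. First, you place the Whitney decomposition in the time variables $(t,s)$ relative to the diagonal $\{t=s\}$, and assert that time separation provides "the phase-space and spatial separation needed" for Lee's bilinear theorem. It does not. Lee's bilinear oscillatory integral estimates, as used in Proposition~\ref{locprop} and Lemma~\ref{leelemma}, require angular/frequency separation between the two factors; the paper achieves this via a Whitney decomposition in the microlocal index $\nu$ (i.e., in phase space), exactly as in \eqref{m14}. A Whitney decomposition in time alone does not produce the transversality hypothesis Lee's theorem needs, and for $|t-s|\gtrsim 1$ the scalar kernel bound \eqref{k} still carries the $e^{C_M|t-s|}$ loss, so time-Whitney pieces far from the diagonal are not actually under control by the methods you cite.

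Second, and more seriously, your passage from the $\ell^q_\nu L^q_x$-valued Strichartz bound back to the scalar estimate \eqref{22.3} via "almost-orthogonality of $\{Q_\nu\}$ together with a square-function argument in $L^q_x$" is not available. Almost-orthogonality gives control in $L^2$, and for $q>2$ (indeed for the endpoint $q=q_e=\tfrac{2d}{d-2}$) one cannot pass from $\bigl\|\{A^{\theta_0}_\nu u\}\bigr\|_{L^p_t\ell^q_\nu L^q_x}$ to $\|u\|_{L^p_tL^q_x}$ without incurring a polynomial loss in the number of $\nu$'s, which is $\la^{(2n-3)\e_0}$. This recombination is precisely the nontrivial content of Proposition~\ref{locprop}, and it requires both the height decomposition restricting attention to the set $A_-$ where $|\tilde S_\la f|\le \la^{\frac{n-1}{4}+\e_1}$, and the phase-space Whitney decomposition plus Lee's bilinear bounds to absorb the far-from-diagonal pairs $(\nu,\tilde\nu)$. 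Relatedly, the paper's height decomposition \eqref{22.24} is a Bourgain-style splitting on the size of the output $|\tilde S_\la f(x,t)|$, not on the modulus of the kernel of $S_\la S_\la^*$ as you propose; the kernel bound \eqref{k} is used in the $A_+$ analysis but only after a $|t-s|\le 1$ versus $|t-s|>1$ split, which is a simple cut rather than a Whitney decomposition.
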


We claim that \eqref{22.3}  implies Theorem \ref{nonposthm}.  First note that, by changing scales, \eqref{22.1} and \eqref{22.3} imply that for large
enough $\la$ we have the analog of \eqref{00.10'} where the interval
$[0,\log\la\cdot \la^{-1}]$ in the left is replaced by
$[0, \tfrac12c_0\log\la\cdot \la^{-1}]$, and this of course implies \eqref{00.10'}
at the expense of including an additional factor of $(c_0/2)^{-1/q_c}$ in the constant in 
the right if $c_0<2$.  As we indicated before, the estimate \eqref{00.10'} for
large $\la$ and Littlewood-Paley theory yield Theorem~\ref{nonposthm}, which verifies
our claim regarding \eqref{22.3}.  

Also note that if we replace $[0, T]$ by $[0,1]$, then by \eqref{00.3''} and a rescaling argument we have 
\begin{equation}\label{22.3''}
\| S_\la f\|_{L^p_tL^q_x(M^{n-1}\times [0,1])}\le C\la^{\frac1{p}}  \, \|f\|_{L^2(M^{n-1})},
\end{equation}
which hold on a general smooth compact manifold.

To prove \eqref{22.3}, note  that since the case $(p,q)=(\infty, 2)$ is trivial, by interpolation, it suffices to consider the $(p,q)$ pairs which satisfy
\begin{equation}\label{00.4'}
(p, q)=(2,\tfrac{2d}{d-2}) \, \,
\text{if } \, d\ge 3, \, \, \, 
\text{or } \, d(\tfrac12-\tfrac1q)=\tfrac2p, \,\,4< q<\infty \, \, 
\text{if } \, \, d=2.
\end{equation}
The condition $q>4$ is equivalent to $q>p$ when $d=2$,
this will allow us to simplify some of the calculations to follow. 

As in \cite{blair2023strichartz}, we need to introduce a few auxiliary operators that allow us to use bilinear
techniques. First of all, we need  to compose the ``global operators'' $S_\la$ with related
local ones.  Motivated  by the recent work of the  authors \cite{HSSchro},
our ``local'' auxiliary operators will be  the following ``quasimode'' operators adapted to the
scaled Schr\"odinger operators $\la D_t+\Delta_g$, 
\begin{equation}\label{22.5}
\sigma_\la = \sigma\bigl(\la^{1/2}|D_t|^{1/2}-P\bigr) \, \tilde \beta(D_t/\la),
\end{equation}
where
\begin{equation}\label{22.6}
\sigma\in {\mathcal S}(\R) \, \, \text{satisfies } \, \,
\sigma(0)=1 \, \, \text{and } \, \, \text{supp }\Hat \sigma
\subset \delta\cdot [1-\delta_0,1+\delta_0]=[\delta-\delta_0\delta, \delta+\delta_0\delta],
\end{equation}
with $0<\delta,\delta_0<1/8$ to be specified later, and, also here
\begin{equation}\label{22.7}
\tilde \beta\in C^\infty_0((1/8,8)) \quad \text{satisfies } \, \,
\tilde \beta=1 \, \, 
\text{on } \, \, [1/6,6].
\end{equation}
Here the properties of $\sigma$, as well as the small constants $\delta$ and $\delta_0$ are the same as those in \cite{blair2023strichartz}, which allows us to use the bilinear oscillatory integral estimates in \cite{blair2023strichartz} in the next section. The smallness of $\delta$ is also related to another initial 
  microlocalization that is needed for the bilinear arguments, as we shall describe below.
  
Let us write
\begin{equation}\label{22.8}
I=\sum_{j=1}^N B_j(x,D),
\end{equation}
where each $B_j\in S^0_{1,0}(M^{n-1})$ is a standard
pseudo-differential operator with symbol supported in
a small conic neighborhood of some $(x_j,\xi_j)\in
S^*M$.  The size of the support will be described
later; however, these operators will not depend
on our parameter $\la\gg 1$.  Next, if 
$\tilde \beta$ is as in \eqref{22.7} then the
dyadic operators
\begin{equation}\label{22.9}
B=B_{j,\la} = B_j\circ \tilde \beta(P/\la)
\end{equation}
are uniformly bounded on $L^p$, i.e.,
\begin{equation}\label{22.10}
\|B\|_{L^p(M^{n-1})\to L^p(M^{n-1})}
=O(1) \quad \text{for } \, \, 1\le p\le \infty.
\end{equation}
Also, note that since $\sigma\in {\mathcal S}(\R)$
a simple calculation shows that if $\la_k$ is an
eigenvalue of $P$
$$(1-\tilde \beta(\la_k/\la))
\, \sigma(\la^{1/2}|\tau|^{1/2}-\lambda_k) \, 
\tilde \beta(\tau/\la)
=O(\la^{-N}(1+\la_k+|\tau|)^{-N}) \, \, \forall \, N.
$$
Consequently,
$$\|\sigma_\la -\tilde \beta(P/\la)\circ \sigma_\la
\|_{L^2(M^{n-1}\times [0,T])\to {L^p_tL^q_x(M^{n-1}\times [0,T])}}
=O(\la^{-N}) \quad \forall \, N.$$

Thus, if $B_j$ is as in \eqref{22.8} and 
$B_{j,\la}$ is the corresponding dyadic operator
in \eqref{22.9}
\begin{equation}\label{22.11}
\|B_j\sigma_\la -B_{j,\la}\sigma_\la\|_{L^2(M^{n-1}\times [0,T])\to {L^p_tL^q_x(M^{n-1}\times [0,T])}}
=O(\la^{-N}) \quad \forall \, N,
\end{equation}
since operators in $S^0_{1,0}(M^{n-1})$ are bounded on $L^p$ for $1<p<\infty$.

We need one more result for now about these local
operators:

\begin{lemma}\label{lemmadiff}
If $S_\la$ as in \eqref{22.2}, $\sigma_\la$ is as in \eqref{22.5} and $(p,q)$ are as in \eqref{00.4}, then
\begin{equation}\label{22.12}
\|(I-\sigma_\la)\circ S_\la f\|_{L^p_tL^q_x(M^{n-1}\times [0,T])}\le CT^{\frac1{p}-\frac12}\la^{\frac1{p}}\|f\|_2.
\end{equation}
We also have 
\begin{equation}\label{qu1}
\bigl\|  \sigma_\la F\bigr\|_{L^p_tL^q_x}
\le C\la^{\frac1{p}}\|F\|_{L^2_{t,x}}.
\end{equation}
\end{lemma}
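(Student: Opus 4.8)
The plan is to prove the two estimates \eqref{22.12} and \eqref{qu1} essentially in parallel, since both reduce to the same kind of fixed-time operator bound after peeling off the $t$-integration. For \eqref{qu1} I would first note that $\sigma_\la$ involves the cutoff $\tilde\beta(D_t/\la)$, which localizes the time-frequency to size $\approx\la$; on this frequency range the operator $\sigma(\la^{1/2}|D_t|^{1/2}-P)$ can be analyzed as a function of $P$ alone (up to rapidly decaying errors, as in the displayed calculation just before the lemma). Writing $F$ via its spatial and temporal spectral decompositions, one is reduced to the kernel of $\sigma(\la^{1/2}|D_t|^{1/2}-P)$ acting on functions whose $P$-frequencies lie in a band of width $O(1)$ around $\la$ (the effective support of $\sigma$, by \eqref{22.6}). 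So $\sigma_\la F$ is, modulo harmless errors, a superposition over $|\tau|\approx\la$ of functions $\sigma(\la^{1/2}|\tau|^{1/2}-P)\tilde\beta(P/\la)\hat F(\cdot,\tau)$, each of which is spectrally localized to a unit band around $\sqrt{\la|\tau|}\approx\la$. For such unit-band-localized functions the universal spectral projection bounds of the second author \cite{sogge88} give $\|g\|_{L^q_x}\lesssim\la^{d(\frac12-\frac1q)/2}\|g\|_{L^2_x}=\la^{1/p}\|g\|_{L^2_x}$ for the admissible exponent (using $d(\frac12-\frac1q)=\frac2p$), and then Plancherel in $t$ together with the $L^2\to L^2$ boundedness of $\sigma(\cdot)\tilde\beta(D_t/\la)$ closes \eqref{qu1}; the $L^p_t$ norm in $t$ is controlled by $L^2_t$ here since $\hat\sigma$ has compact support, so convolution in $t$ by the kernel of $\sigma(\la^{1/2}|D_t|^{1/2}-P)$ does not spread the $t$-support (this is where $\supp\hat\sigma\subset[\delta-\delta_0\delta,\delta+\delta_0\delta]$ enters), hence $\|\sigma_\la F\|_{L^p_t}\lesssim\|\sigma_\la F\|_{L^2_t}$ on the relevant compact $t$-interval. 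Actually the cleanest route for \eqref{qu1} is: use \eqref{22.3''} (which holds on any compact manifold) to bound $\|e^{-it\Delta_g}\beta(P/\la)\|_{L^2_x\to L^p_tL^q_x([0,1])}\lesssim\la^{1/p}$, then observe that $\sigma_\la$ can be written, via Fourier inversion in $t$ applied to $\hat\sigma$, as an average $\sigma_\la F=\int \hat\sigma(s)\,(\text{something})\,ds$ of time-translates/modulations of $e^{-it\la^{-1}\Delta_g}\beta(P/\la)$-type operators applied to $F$, with the $s$-integral over a compact set, so Minkowski's inequality reduces \eqref{qu1} to \eqref{22.3''}.

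For \eqref{22.12}, the point is that $I-\sigma_\la$ kills the resonant part. I would expand $S_\la f=\eta(t/T)\,e^{-it\la^{-1}\Delta_g}\beta(P/\la)f$ in the joint $(t,x)$-spectrum: the function $v=e^{-it\la^{-1}\Delta_g}\beta(P/\la)f$ satisfies $(\la D_t+\Delta_g)v=0$, i.e., its time-frequency $\tau$ and space-frequency $\la_k$ (eigenvalue of $P$) are locked by $\la\tau=\la_k^2$ before multiplying by $\eta(t/T)$; after multiplying by $\eta(t/T)$ the time-frequency is smeared on a scale $\approx T^{-1}$. On the set where $\la^{1/2}|\tau|^{1/2}\approx\la_k$ and $\tau/\la\in\supp\tilde\beta$ we have $\sigma_\la\approx 1$, so $(I-\sigma_\la)$ contributes only from the region where either $\tilde\beta(\tau/\la)\neq 1$ — which is $O(\la^{-N})$ small by a nonstationary-phase/rapid-decay argument exactly as in the displayed estimate preceding the lemma, since $\beta(P/\la)$ forces $\la_k\approx\la$ so $\tau\approx\la$ — or from the region $|\tau/\la|\in[1/6,6]$ but $|\la^{1/2}|\tau|^{1/2}-\la_k|\gtrsim 1$. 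On this latter region $\sigma(\la^{1/2}|\tau|^{1/2}-\la_k)$ is rapidly decaying in the distance $|\la^{1/2}|\tau|^{1/2}-\la_k|$; combined with the fact that $\eta(t/T)v$ has $\tau$ concentrated within $O(T^{-1})$ of $\la_k^2/\la$, i.e., $|\la^{1/2}|\tau|^{1/2}-\la_k|\lesssim T^{-1}$ there, a Plancherel-in-$t$ argument yields a gain of a power of $T^{-1}$ (more precisely, one integrates $|\sigma(r)|^2$ against a bump of width $T^{-1}$ in $r$, picking up $T^{-1/2}$ in $L^2_t$), and then the $L^2_tL^q_x$ norm is upgraded to $L^p_tL^q_x$ at the cost of $T^{1/p-1/2}$ by Hölder in $t$ on the interval $[0,T]$ (recall $p\ge 2$). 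The spatial $L^q$ norm is handled, as in \eqref{qu1}, by the unit-band spectral projection estimates of \cite{sogge88}, producing the $\la^{1/p}$. Assembling these pieces gives the claimed bound $CT^{1/p-1/2}\la^{1/p}\|f\|_2$.

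I expect the main obstacle to be making the heuristic ``$\eta(t/T)v$ has $\tau$ within $O(T^{-1})$ of $\la_k^2/\la$, while $\sigma$ forces $|\la^{1/2}|\tau|^{1/2}-\la_k|\gtrsim 1$, hence the overlap costs $T^{-1/2}$'' fully rigorous, i.e., quantifying precisely how the Fourier cutoff $\hat\eta(T\cdot)$ interacts with the rapidly decaying multiplier $\sigma(\la^{1/2}|\tau|^{1/2}-\la_k)$ uniformly in the eigenvalue $\la_k$ (with $\la_k\approx\la$ from $\beta(P/\la)$) — the linearization $\la^{1/2}|\tau|^{1/2}-\la_k\approx (2\la)^{-1/2}(\la\tau-\la_k^2)/\la_k^{1/2}\cdot(\dots)$ needs to be controlled on the $\tau$-range $|\tau|\approx\la$ so that a unit change in $\la^{1/2}|\tau|^{1/2}-\la_k$ corresponds to an $O(1)$ change in $\tau$, which is exactly where $\tilde\beta(\tau/\la)$ keeping $\tau$ comparable to $\la$ is essential. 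Once that Jacobian is pinned down, everything else is Plancherel, Hölder, and the classical $L^2\to L^q$ eigenfunction bounds. One should also double-check that the error term ``$\tilde\beta(\tau/\la)\neq 1$'' really is $O(\la^{-N})$ in the relevant operator norm and not merely pointwise on the spectrum — but this is precisely the content of the displayed estimate the authors placed right before the lemma, so it may be quoted directly.
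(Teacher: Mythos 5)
Your argument invokes, in two places, a ``reverse H\"older'' inequality of the form $\|g\|_{L^p_t(I)}\lesssim\|g\|_{L^2_t(I)}$ for $p\ge 2$: for \eqref{qu1} you assert ``$\|\sigma_\la F\|_{L^p_t}\lesssim\|\sigma_\la F\|_{L^2_t}$ on the relevant compact $t$-interval'' because $\hat\sigma$ has compact support, and for \eqref{22.12} you pass from $L^2_tL^q_x$ to $L^p_tL^q_x$ ``at the cost of $T^{1/p-1/2}$ by H\"older in $t$.'' Neither step is valid when $p>2$: on a finite interval H\"older runs the other way, $\|g\|_{L^2(I)}\le|I|^{1/2-1/p}\|g\|_{L^p(I)}$, and there is no reverse inequality. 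Nor does the compact support of $\hat\sigma$ help: $\tilde\beta(D_t/\la)$ only pins the $\tau$-frequency of $\sigma_\la F$ to a band of width $\sim\la$, so even Bernstein would cost a further $\la^{1/2-1/p}$. This is a genuine gap for every admissible pair with $p>2$, which for $d=2$ is all of them.

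The rest of your argument is sound at $p=2$ (where the fixed-$\tau$ slices are indeed spectrally localized to an $O(1)$ band around $\sqrt{\la|\tau|}\approx\la$, so the unit-band bound of \cite{sogge88} gives $\la^{\sigma(q_e)}=\la^{1/2}$ — note this sharper exponent, not $\la^{d(\frac12-\frac1q)/2}$, is what \cite{sogge88} supplies; the two only agree at the endpoint). The correct route to $p>2$ is not H\"older but interpolation with the $L^\infty_tL^2_x$ endpoint, for which $\|\sigma_\la F\|_{L^\infty_tL^2_x}\lesssim\|F\|_{L^2_{t,x}}$ and $\|(I-\sigma_\la)S_\la f\|_{L^\infty_tL^2_x}\lesssim T^{-1}\|f\|_2$ both follow from Cauchy--Schwarz in $\tau$, using $\int|\sigma(\la^{1/2}|\tau|^{1/2}-\la_k)\tilde\beta(\tau/\la)|^2\,d\tau=O(1)$ in the first case and $(1-\sigma(r))T\hat\eta(Tr)=O(T^{-1})$ near $r=0$ in the second. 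Interpolating with your $p=2$ bounds then yields $\la^{1/p}$ for \eqref{qu1} and $\la^{1/p}T^{1/p-1}\le\la^{1/p}T^{1/p-1/2}$ for \eqref{22.12}, which is what is claimed.

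Two further remarks. First, once repaired this way your route is genuinely different from the paper's: the authors defer the proof to Lemmas 2.2 and 2.3 of \cite{blair2023strichartz} and stress that it uses the local dyadic Strichartz estimate \eqref{00.3''} (essentially, a $TT^*$ argument comparing the kernel of $\sigma_\la\sigma_\la^*$ to the truncated kernel $L_\la$ of \eqref{ok}), whereas your argument needs only fixed-time spectral projection bounds plus interpolation. Second, the ``cleanest route'' you sketch for \eqref{qu1} does not work as stated: Fourier inversion gives $\sigma(\la^{1/2}|D_t|^{1/2}-P)=(2\pi)^{-1}\int\hat\sigma(r)\,e^{ir\la^{1/2}|D_t|^{1/2}}e^{-irP}\,dr$, and $e^{-irP}$ is the half-wave, not the Schr\"odinger, propagator, so $\sigma_\la F$ is not literally an average of time-translated or modulated Schr\"odinger evolutions and the proposed reduction to \eqref{22.3''} by Minkowski does not follow.
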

Lemma~\ref{lemmadiff} is a simple generalization of the results in  Lemma 2.2 and 2.3 of \cite{blair2023strichartz} to all {\em admissible} pairs $(p,q)$. Using the the local dyadic Strichartz estimates \eqref{00.3''}, 
the proof of Lemma~\ref{lemmadiff} follows from the same arguments as in \cite{blair2023strichartz},  so we skip the details here.

For a given $B=B_{j,\la}$ as in \eqref{22.9} let
us define the microlocalized variant of 
$\sigma_\la$ as follows
\begin{equation}\label{22.13}
\tilde \sigma_\la = B\circ \sigma_\la, 
\quad B=B_{j,\la},
\end{equation}
and the associated ``semi-global'' operators
\begin{equation}\label{22.14}
\tilde S_\la =\tilde \sigma_\la \circ S_\la.
\end{equation}

By \eqref{22.8}, \eqref{22.11} and \eqref{22.12},
in order to prove Proposition~\ref{mainprop}, it suffices
to show that if $T=c_0\log\la$ with 
$c_0>0$ sufficiently small (depending on $M^{n-1}$), then, if all the sectional curvatures of
$M^{n-1}$ are nonpositive, for $(p,q)$ be as in \eqref{00.4'}, we have
\begin{equation}\label{22.3'}
\|\tilde S_\la f\|_{L^p_tL^q_x(M^{n-1}\times [0,T])}\le C\la^{\frac1{p}}\|f\|_2.
\end{equation}

As we shall see, in order to prove \eqref{22.3'} we shall need to take $\delta$ and $\delta_0$ in
\eqref{22.6} and \eqref{22.7} to be sufficiently small for each $j$; however, since, by the compactness of $M^{n-1}$
and the arguments to follow, the sum in \eqref{22.8} can be taken to be finite,
 we can take these two parameters to be the minimum over what is needed for $j=1,\dots,N$.

\bigskip

\noindent{\bf 2.1. Height Decomposition.}

Next we set up a variation of an argument of
Bourgain~\cite{BourgainBesicovitch} originally used
to study Fourier transform restriction problems, and, more
recently, to study eigenfunction problems in
\cite{BHSsp}, \cite{SBLog} and \cite{sogge2015improved}.
This involves splitting the estimates in 
Proposition~\ref{mainprop} into two heights
involving relatively large and small values of
$|\tilde S_\la f(x,t)|$.  

To describe this, here, and in what follows we shall
assume, as we just did, that $f$ is $L^2$-normalized, that is
\begin{equation}\label{normalize}\|f\|_2=1.
\end{equation}  Then, we shall prove
the estimates in Proposition~\ref{mainprop}, using
very different techniques by estimating $L^p_tL^q_x$ bounds
over the two regions
\begin{multline}\label{22.24}A_+=\{(x,t)\in  M^{n-1}\times [0,T]: \, |\tilde S_\la f(t,x)|\ge \la^{\frac{n-1}4+\e_1} \},
 \\
  \text{and } \, \, A_-=\{(x,t)\in M^{n-1} \times  [0,T] : \, |\tilde S_\la f(x,t)|< \la^{\frac{n-1}4+\e_1} \}.
  \end{multline}
Due to the numerology of the powers of $\la$ arising,
the splitting occurs at height $\la^{\frac{n-1}4+\e_1}$, where $\e_1>0$ is a small constant that may depend on the dimension $d=n-1$. As we shall see later in \eqref{far} and \eqref{iii1}, we can take $\e_1=\frac 1{100}$ for $n-1\ge 3$ while  for $n-1=2$, the choice of $\e_1$ depends on the exponent $q$ for {\em {admissible}} pairs $(p,q)$, with $\e_1\rightarrow 0$ as $q\rightarrow \infty$.
The transition occurring at, basically, $\la^{\frac{n-1}4}$ is natural and arises
due to Knapp-type phenomena, both in Euclidean problems,
as well as the geometric ones that we are considering here.  

Thus, to prove Proposition~\ref{mainprop}, it suffices to prove the analog \eqref{22.3'} for the two regions in \eqref{22.24}.



\medskip

 \noindent{\bf 2.2. Estimates for relatively large values: Proof of \eqref{22.3'}  on the set $A_+$ .}

We first note that, by Lemma~\ref{lemmadiff} 
and \eqref{22.10}
we have
$$ \|\tilde S_\la f\|_{L^p_tL^q_x(A_+)} \le \| B S_\la f\|_{L^p_tL^q_x(A_+)} + CT^{\frac1{p}-\frac12}\la^{\frac1{p}},
$$
and, since $p\ge2$ for $(p,q)$ as in \eqref{00.4'},  \eqref{22.3'} would follow from
\begin{equation}\label{high'}
\| BS_\la f\|_{L^p_tL^q_x(A_+)}\le C\la^{\frac1{p}}+\tfrac12 \|\tilde S_\la f\|_{L^p_tL^q_x(A_+)}.
\end{equation}

To prove this we shall adapt an argument of
Bourgain~\cite{BourgainBesicovitch} and more
recent variants in
\cite{BHSsp} and  \cite{sogge2015improved} .
Specifically, choose $g(x,t)$ such that
$$\|g\|_{L^{p'}_tL^{q'}_x(A_+)}=1
\quad \text{and } \, \,
\|BS_\la f\|_{L^p_tL^q_x(A_+)}
=\iint BS_\la f\cdot
\overline{\bigl(\1_{A_+}\cdot g\bigr)} \, dx dt.
$$
Then, since we are assuming that $\|f\|_2=1$, by
the Schwarz inequality
\begin{align}\label{22.29}
\|BS_\la f\|^2_{L^p_tL^q_x(A_+)}
&= \Bigl( \, \int f(x) \, \cdot \,  
\overline{\bigl(S^*B^*\bigr)(\1_{A_+}
\cdot g\bigr)(x)} \, dx \, \Bigr)^2
\\
&\le \int |S^*_\la B^*(\1_{A_+}\cdot g)(x)|^2 \, dx
\notag
\\
&=\iint \bigl(BS_\la S^*_\la B^*\bigr)(\1_{A_+}\cdot
g)(x,t) \, \overline{(\1_{A_+}\cdot
g)(x,t)} \, dx dt
\notag
\\
&=\iint \bigl(B\circ L_\la\circ B^*\bigr)(\1_{A_+}\cdot
g)(x,t) \, \overline{(\1_{A_+}\cdot
g)(x,t)} \, dx dt \notag
\\
&\qquad+\iint \bigl(B\circ G_\la\circ B^*\bigr)(\1_{A_+}\cdot
g)(x,t) \, \overline{(\1_{A_+}\cdot
g)(x,t)} \, dx dt \notag
\\
&=I + II, \notag
\end{align}
where $L_\la$ is the integral operator with kernel equaling that of $S_\la S^*_\la$ if $|t-s|\le 1$ and 
$0$ otherwise, i.e,
\begin{multline}\label{ok}
L_\la(x,t;y,s)=
\\
\begin{cases}
\bigl(S_\la S_\la^*\bigr)(x,t;y,s)=
\eta(t/T)\eta(s/T) \bigl(\, 
\beta^2(P/\la)e^{-i(t-s)\la^{-1}\Delta_g}\bigr)(x,y), \, \,
\text{if } \, \, |t-s|\le 1,
\\
0 \, \, \, \text{otherwise}.
\end{cases}
\end{multline}
Since $p\ge 2$, it is straightforward to see that \eqref{22.3''}
yields
\begin{equation}\label{22.88}
\|L_\la \|_{L^{p'}_tL^{q'}_x \to L^p_tL^q_x}
=O(\la^{\frac 2p}).
\end{equation}

If we use this, along with H\"older's inequality and
\eqref{22.10}, we obtain for the term $I$ in \eqref{22.29}
\begin{align}\label{k301}
|I|&\le \|BL_\la B^*(\1_{A_+}\cdot g)\|_{ L^p_tL^q_x}
\cdot \| \1_{A_+}\cdot g \|_{L^{p'}_tL^{q'}_x}
\\
&\lesssim \|L_\la B^*(\1_{A_+}\cdot g)\|_{L^p_tL^q_x} 
\cdot
\|\1_{A_+}\cdot g \|_{L^{p'}_tL^{q'}_x}
\notag
\\
&\lesssim \la^{\frac2{p}} \|B^*(\1_{A_+}\cdot g) \|_{L^{p'}_tL^{q'}_x}
\cdot
\|\1_{A_+}\cdot g \|_{L^{p'}_tL^{q'}_x}
\notag
\\
&\lesssim \la^{\frac2{p}}\|g\|^2_{L^{p'}_tL^{q'}_x(A_+)}
=\la^{\frac2{p}}. \notag
\end{align}

To estimate  the other term in \eqref{22.29}, $II$, we need the 
 following kernel bound which is Proposition 4.1 in \cite{blair2023strichartz},
\begin{equation}\label{k}
|(S_\la S_\la^*)(x,t;y,s)|\le C
\la^{\frac{n-1}2} \, |t-s|^{-\frac{n-1}2}
\exp(C_M|t-s|), \quad \text{if } \, \, |t-s|\le 2T.
\end{equation}
The proof of \eqref{k} follows from arguments in B\'erard~\cite{Berard}, and also  in \cite{BSTop}, \cite{SoggeZelditchL4} and other related works, which use the Hadamard parametrix and the 
Cartan-Hadamard theorem to lift the 
calculations that will be needed up to the
universal cover $({\mathbb R}^{n-1},\tilde g)$
of $(M^{n-1},g)$.

If we choose $c_0$ small enough
so that if $C_M$ is the constant
in \eqref{k}
$$\exp(2C_MT)\le \la^{\e_1}, \quad \text{if  }
\,
T=c_0\log\la \, \, \text{and } \, \, \la \gg 1.
$$
Then, since $\eta(t)=0$ for $|t|\ge1$, it follows
from \eqref{ok} and \eqref{k} that
$$\|G_\la \|_{L^1(M^{n-1}\times \R) 
\to L^\infty (M^{n-1}\times \R)}\le C
\la^{\frac{n-1}2+{\e_1}}.
$$
As a result, since, by \eqref{22.10}, the dyadic operators $B$ are bounded
on $L^1$ and $L^\infty$, we can repeat the arguments
to estimate $I$ and use
H\"older's inequality to see that
$$|II|\le C\la^{\frac{n-1}2}\la^{\e_1}
\| \1_{A_+}\cdot g\|^2_1
\le C \la^{\frac{n-1}2}\la^{\e_1}
\|g\|^2_{L^{p'}_tL^{q'}_x} \cdot
\|\1_{A_+}\|_{L^{p}_tL^{q}_x}^2
=C \la^{\frac{n-1}2}\la^{\e_1}
\|\1_{A_+}\|_{L^{p}_tL^{q}_x}^2.
$$
If we recall the definition of $A_+$ in
\eqref{22.24}, we can estimate the last
factor:
$$\|\1_{A_+}\|_{L^{p}_tL^{q}_x}^2
\le \bigl(\la^{\frac{n-1}4+\e_1}\bigr)^{-2}
\|\tilde S_\la f\|^2_{{L^{p}_tL^{q}_x}(A_+)}.$$
Therefore, 
$$|II|\lesssim \la^{-\e_1}
\|\tilde S_\la f\|^2_{{L^{p}_tL^{q}_x}(A_+)}
\le \bigl(\tfrac12 \|\tilde S_\la f\|_{{L^{p}_tL^{q}_x}(A_+)}
\bigr)^2,
$$
assuming, as we may, that $\la$ is large enough.

If we combine this bound with the earlier one,
\eqref{k301} for $I$, we conclude that
\eqref{high'} is valid, which completes the 
proof of \eqref{22.3'} on the set $A_+$. \qed

\bigskip

\noindent{\bf 2.3.   Estimates for relatively small values: Proof of  \eqref{22.3'} on the set $A_-$.}

We now turn to the proving the ${L^{p}_tL^{q}_x}(A_-)$ estimates  in \eqref{22.3'}.
To do this we
need to borrow the bilinear estimates from \cite{blair2023strichartz}, which replies on the results from bilinear harmonic analysis in \cite{LeeBilinear} and
\cite{TaoVargasVega}.

We need to utilize a microlocal decomposition as in \cite{blair2023strichartz}.
Recall that the symbol $B(x,\xi)$ of $B$ in \eqref{22.9} is supported in a small
conic neighborhood of some $(x_0,\xi_0)\in S^*M^{n-1}$.  We may assume that its symbol has
small enough support so that we may work in a coordinate chart $\Omega$ and that
$x_0=0$, $\xi_0=(0,\dots,0,1)$ and $g_{jk}(0)=\delta^j_k$ in the local coordinates.
So, we shall assume that $B(x,\xi)=0$ when $x$ is outside a small relatively compact neighborhood
of the origin or $\xi$ is outside of a small conic neighborhood of $(0,\dots,0,1)$.  

Next, let us define the microlocal cutoffs that we shall use.   We fix a function
$a\in C^\infty_0({\mathbb R}^{2(n-2)})$ supported in $\{z: \, |z_j|\le 1, \, \, 1\le j\le 2(n-2)\}$
 which satisfies
\begin{equation}\label{m1}
\sum_{j\in {\mathbb Z}^{2(n-2)}}a(z-j)\equiv 1
.
\end{equation}
We shall use this function to build our microlocal cutoffs.
By the above, we shall focus on defining them 
 for $(y,\eta)\in S^*\Omega$ with    $y$ near the origin
 and  $\eta$ in a small conic neighborhood of $(0,\dots,0,1)$. 
We shall let
$$\Pi=\{y: \, y_{n-1}=0\}$$
be the points in $\Omega$ whose last coordinate vanishes.   Let $y'=(y_1,\dots, y_{n-2})$ and
$\eta'=(\eta_1,\dots,\eta_{n-2})$ denote the first $n-2$ coordinates of $y$ and $\eta$, respectively.
 For $y\in \Pi$ near $0$ and $\eta$ near $(0,\dots,0,1)$ we can
just use the functions $a(\theta^{-1}(y',\eta')-j)$, $j\in {\mathbb Z}^{2(n-2)}$ to obtain cutoffs of scale $\theta$. 

We can then extend the definition to a neighborhood of $(0,(0,\dots,0,1))$ by setting for $(x,\xi)\in S^*\Omega$ in this neighborhood
\begin{equation}\label{m2}
a^\theta_j(x,\xi)=a(\theta^{-1}(y',\eta')-j) \quad
\text{if } \, \, \chi_s(x,\xi)=(y',0,\eta',\eta_{n-1}) \, \, \, \text{with } \, \, \, s=d_g(x,\Pi).
\end{equation}
Here $\chi_s$ denotes geodesic flow in $S^*\Omega$.  Thus, $a^\theta_j(x,\xi)$ is constant on all geodesics
$(x(s),\xi(s))\in S^*\Omega$ with $x(0)\in \Pi$ near $0$ and $\xi(0)$ near $(0,\dots,0,1)$.   As a result,
\begin{equation}\label{m3}
a^\theta_j(\chi_s(x,\xi))=a^\theta_j(x,\xi)
\end{equation}
for $s$ near $0$ and $(x,\xi)\in S^*\Omega$ near $(0,(0,\dots,0,1))$.

We then extend the definition of the cutoffs to a conic neighborhood of $(0,(0,\dots,0,1))$  in $T^*\Omega \, \backslash \, 0$ by setting
\begin{equation}\label{m4}
a^\theta_j(x,\xi)=a^\theta_j(x,\xi/p(x,\xi)).
\end{equation}

Notice that if $(y'_j,\eta'_j)=\theta j$ and $\gamma_j$ is the geodesic in $S^*\Omega$ passing through $(y'_j,0,\eta_j)\in S^*\Omega$
with $\eta_j\in S^*_{(y'_j,0)}\Omega$ having $\eta'_j$ as its first $(n-2)$ coordinates then
\begin{equation}\label{m5}
a^\theta_j(x,\xi)=0 \quad \text{if } \, \, \,
\text{dist }\bigl((x,\xi), \gamma_j\bigr)\ge C_0\theta,
\end{equation}
for some fixed constant $C_0>0$.  Also,  $a^\theta_j$ satisfies the estimates
\begin{equation}\label{m6}
\bigl|\partial_x^\sigma \partial_\xi^\gamma a^\theta_j(x,\xi)\bigr| \lesssim \theta^{-|\sigma|-|\gamma|}, \, \, \,
(x,\xi)\in S^*\Omega
\end{equation}
related to this support property.

The $a^\theta_j$ provide ``directional'' microlocalization.  We also need a ``height'' localization since the characteristics of
the symbols of our scaled Schr\"odinger operators lie on paraboloids.  The variable coefficient operators that we shall use of course are 
adapted to our operators and are analogs of ones that are used in the study of Fourier restriction problems involving paraboloids.

To construct these, choose $b\in C^\infty_0(\R)$ supported in $|s|\le1$  satisfying $\sum_{-\infty}^\infty b(s-\ell)\equiv 1$.
 We then simply define the ``height operator'' as follows
\begin{equation}\label{m7}
A_\ell^\theta(P)=
b(\theta^{-1}\la^{-1}(P -\la\kappa^\theta_\ell)) \, 
\Upsilon(P/\la),
\quad \kappa^\theta_\ell = 1+\theta\ell, \quad
|\ell|\lesssim \theta^{-1},
\end{equation}
where if $\tilde \beta$ is as in \eqref{22.7}
\begin{equation}\label{ups}
\Upsilon\in C^\infty_0((1/10,10)) \, \, \text{satisfies } \, \, \,
\Upsilon(r)=1 \, \, \, \text{in a neighborhood of} \, \, \text{supp } \, \tilde \beta.
\end{equation}
Thus, these operators microlocalize $P$ to 
intervals of size $\approx \theta\la$ about ``heights''
$\la\kappa^\theta_\ell\approx \la$.  As we shall see below,
different ``heights'' will give rise to different
``Schr\"odinger tubes" about which the kernels of 
our microlocalization of the $\tilde \sigma_\la$
operators are highly concentrated.
Also, standard arguments as in \cite{SFIO2} show that if $A^\theta_\ell(x,y)$ is the kernel of this operator then
\begin{equation}\label{m8}
A^\theta_\ell(x,y)=O(\la^{-N}) \, \forall \, N, \quad
\text{if } \, d_g(x,y)\ge C_0\theta,
\end{equation}
for a fixed constant if $\theta\in [\la^{-\delta_0},1]$ with, as we are assuming $\delta_0<1/2$.

If $\psi(x)\in C^\infty_0(\Omega)$ equals $1$ in a neighborhood of the $x$-support of the $B(x,\xi)$ and
$A^\theta_j(x,D_x)$ is the operator with symbol
\begin{equation}\label{m9}
A^\theta_j(x,\xi)=\psi(x) a^\theta_j(x,\xi),
\end{equation}
then for $\nu=(\theta j, \theta \ell)\in \theta{\mathbb Z}^{2(n-2)+1}$ we can finally define the cutoffs that we shall use:
\begin{equation}\label{m10}
A^\theta_\nu=A^\theta_j(x,D_x)\circ A^\theta_\ell(P).
\end{equation}


Let us collect several basic facts about  the $A^\theta_\nu$ operators for later use. First,  if 
$A^\theta_\nu(x,\xi)$ and $A^\theta_{\tilde \nu}(x,\xi)$ are 
the symbols of $A^\theta_\nu$ and $A^\theta_{\tilde \nu}$, respectively, then
\begin{equation}\label{sep0}
A^\theta_\nu(x,\xi)A^\theta_{\tilde \nu}(x,\xi)\equiv 0, \quad \text{if } \, \, \, |\nu-\tilde \nu|\ge C_0 \theta,
\end{equation}
for some uniform constant $C_0$.  Also,  the principal symbol $a_\nu^\theta(x,\xi)$ of $A^\theta_\nu$ are all supported in a neighborhood of the support of $B(x,\xi)$, and satisfies
\begin{equation}\label{invariant}
a^\theta_\nu(\chi_r(x,\xi))=a^\theta_\nu(x,\xi), \, \, \text{on supp } B(x,\xi) \, \,
\text{if } \, \, |r|\le 2\delta,
\end{equation}
assuming that $\delta>0$ is small.

Besides, as operators between any  $L^p\to L^q$, $1\le p,q\le \infty$, spaces we have
\begin{equation}\label{m11}
\tilde \sigma_\la =
\sum_\nu \tilde \sigma_\la A^\theta_\nu  +O(\la^{-N}) \, \, \, \forall N,
\end{equation}
and the $A^\theta_\nu$ are almost orthogonal in the sense that we have 
\begin{equation}\label{m12}
\sum_\nu \|A^\theta_\nu h\|_{L^2_{x}}^2\lesssim \|h\|_{L^2_{x}}^2,
\end{equation}
with constants independent of $\theta\in [\la^{-\e_0},1]$.

Also, since for each $x$ the symbols vanish outside of cubes of sidelength $\theta \la$ and 
$| \partial^\gamma_\xi A^\theta_\nu(x,\xi)|=O((\la\theta)^{-|\gamma|})$, we also have that their kernels
are $O((\theta\la)^{n-1}(1+\theta\la d_g(x,y))^{-N})$ for all $N$ and so
\begin{equation}\label{mp}
\|A_\nu^\theta\|_{L^p(M)\to L^p(M)}=O(1) \quad \forall \, 1\le p\le \infty.
\end{equation}
By interpolation, \eqref{m12} and \eqref{mp} imply
\begin{equation}\label{mp1}
\|A_\nu^\theta h\|_{ \ell_\nu^pL^p(M)}\lesssim \|h\|_{L^p(M)} \quad \forall \, 2\le p\le \infty.
\end{equation}

In view of \eqref{m11} we have for $\theta_0=\la^{-\e_0}$
\begin{equation}\label{m13}
\bigl( \tilde\sigma_\la H\bigr)^2
=\sum_{\nu,\tilde \nu}
\bigl( \tilde \sigma_\la A^{\theta_0}_\nu H
\bigr)\cdot \bigl( \tilde \sigma_\la A^{\theta_0}_{\tilde\nu} H\bigr)
+O(\la^{-N}\|H\|_2^2).
\end{equation}
Recall that in $A_\nu^{\theta_0}$, $\nu \in \theta_0{\mathbb Z}^{2(n-2)+1}$ indexes a $\la^{-\e_0}$-separated set in
${\mathbb R}^{2n-3}$. Here $\e_0<\frac12$ is small constant that we shall specify later, the choice of $\e_0$ depends on the dimension $d=n-1$.

We need to organize the pairs of indices $\nu,\tilde \nu$ in \eqref{m13} as in many earlier works (see \cite{LeeBilinear} and 
\cite{TaoVargasVega}).
To this end, consider dyadic cubes,
$\tau^\theta_\mu$ in ${\mathbb R}^{2n-3}$ of sidelength $\theta=2^k\theta_0$, with $\tau^\theta_\mu$ denoting translations of the cube
$[0,\theta)^{2n-3}$ by $\mu\in \theta{\mathbb Z}^{2n-3}$.
Two such dyadic cubes of sidelength $\theta$ are said
to be {\em close}  if they are not adjacent but have
adjacent parents of length $2\theta$, and, in this case,
we write $\tau^\theta_\mu \sim \tau^\theta_{\tilde \mu}$.
We note that close cubes satisfy $\text{dist}(\tau^\theta_\mu, \tau^\theta_{\tilde \mu})\approx \theta$, and so each fixed cube has $O(1)$ cubes which are ``close'' to it.  Moreover, as noted in 
\cite[p. 971]{TaoVargasVega}, any distinct points $\nu,
\tilde \nu\in {\mathbb R}^{2n-3}$ must lie in
a unique pair of close cubes in this Whitney decomposition.  So, there must be a unique triple
$(\theta=\theta_02^k, \mu, \tilde \mu)$ such that
$(\nu,\tilde \nu)\in \tau^\theta_\mu\times \tau^\theta_{\tilde \mu}$ and $\tau^\theta_\mu\sim
\tau^\theta_{\tilde \mu}$.  We remark that by choosing
$B$ to have small support we need only consider 
$\theta=2^k\theta_0\ll 1$.

Taking these observations into account,
as in earlier works, we conclude 
that the bilinear sum \eqref{m13} can be 
organized as follows:
\begin{multline}\label{m14}
\sum_{\{k\in {\mathbb N}: \, k\ge 10 \, \, \text{and } \, 
\theta=2^k\theta_0\ll 1\}}
\sum_{\{(\mu,\tilde \mu): \, \tau^\theta_\mu
\sim \tau^\theta_{\tilde \mu}\}}
\sum_{\{(\nu,\tilde \nu)\in
\tau^\theta_\mu\times \tau^\theta_{\tilde \mu}\}}
\bigl(\tilde \sigma_\la
A^{\theta_0}_\nu H\bigr) 
\cdot \bigl(\tilde \sigma_\la
A^{\theta_0}_{\tilde \nu} H\bigr)
\\
+\sum_{(\tau,\tilde \tau)\in \Xi_{\theta_0}} 
\bigl( \tilde \sigma_\la A^{\theta_0}_\nu H\bigr) 
\cdot \bigl(\tilde \sigma_\la
A^{\theta_0}_{\tilde \nu} 
H\bigr)
,
\end{multline}
where $\Xi_{\theta_0}$ indexes the remaining pairs such
that $|\nu-\tilde \nu|\lesssim \theta_0=\la^{-\e_0}$,
including the diagonal ones where $\nu=\tilde \nu$.

The key  estimate that we require, which follows from bilinear harmonic analysis arguments, then is the following.

\begin{proposition}\label{locprop}
If $H=S_\la f$ is as in \eqref{22.2} and $(p,q)$ is as in \eqref{00.4'}, we have
\begin{equation}\label{b1}
\|\tilde \sigma_\la H\|_{L^{p}_tL^{q}_x(A_-)}
\lesssim  \bigl\|
 \tilde \sigma_\la A^{\theta_0}_\nu H
\bigr\|_{L^{p}_t\ell_\nu^{q}L^{q}_x(M^{n-1}\times [0, T])}
+\la^{\frac1{p}-}\|H\|_{L^2_{t,x}(M^{n-1}\times \R)}.
\end{equation}
\end{proposition}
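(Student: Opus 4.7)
The plan is to use the standard bilinear/Whitney decomposition strategy of Tao--Vargas--Vega \cite{TaoVargasVega}, adapted to the variable-coefficient setting of \cite{blair2023strichartz}. I first write
\[
\|\tilde\sigma_\la H\|^2_{L^p_tL^q_x(A_-)} = \bigl\|\,\1_{A_-}\,|\tilde\sigma_\la H|^2\,\bigr\|_{L^{p/2}_tL^{q/2}_x},
\]
then expand $|\tilde\sigma_\la H|^2$ via the microlocal identity \eqref{m13}, and finally organize the resulting bilinear sum according to the Whitney decomposition \eqref{m14}. This splits the task into controlling an off-diagonal contribution over well-separated close cube pairs at dyadic scales $\theta = 2^k\theta_0 \in [\theta_0,1]$ and a near-diagonal contribution supported on $\Xi_{\theta_0}$.

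For the off-diagonal piece, at each Whitney scale $\theta$ I would invoke the bilinear oscillatory integral estimates of Lee \cite{LeeBilinear}, packaged as in \cite{blair2023strichartz}, which for close cubes $\tau^\theta_\mu\sim\tau^\theta_{\tilde\mu}$ yield
\[
\bigl\|(\tilde\sigma_\la A^{\theta_0}_\nu H)(\tilde\sigma_\la A^{\theta_0}_{\tilde\nu} H)\bigr\|_{L^{p/2}_tL^{q/2}_x} \lesssim \la^{2/p}\,\theta^{\gamma}\,\|A^{\theta_0}_\nu H\|_{L^2_{t,x}}\|A^{\theta_0}_{\tilde\nu} H\|_{L^2_{t,x}},
\]
for some positive bilinear gain exponent $\gamma = \gamma(q,n) > 0$ in the admissible range \eqref{00.4'}. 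Summing over close pairs at each scale via Cauchy--Schwarz, invoking the $L^2$-almost-orthogonality \eqref{m12} (so that $\sum_\nu\|A^{\theta_0}_\nu H\|^2_{L^2_{t,x}}\lesssim\|H\|^2_{L^2_{t,x}}$), and summing the geometric series in $k$ then yields a total off-diagonal bound of order $\la^{2/p}\theta_0^{\gamma}\|H\|^2_{L^2_{t,x}} = \la^{2/p-\gamma\e_0}\|H\|^2_{L^2_{t,x}}$; extracting a square root produces the error term $\la^{1/p-}\|H\|_{L^2_{t,x}}$ in \eqref{b1}.

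The harder piece is the near-diagonal sum over $\Xi_{\theta_0}$. Since each $\nu$ has only $O(1)$ partners $\tilde\nu$ with $|\nu-\tilde\nu|\lesssim\theta_0$, an AM-GM step reduces the task to controlling $\|\sum_\nu|F_\nu|^2\|_{L^{p/2}_tL^{q/2}_x(A_-)}$, where $F_\nu := \tilde\sigma_\la A^{\theta_0}_\nu H$, and the goal is to bound this by $\|F_\nu\|^2_{L^p_t\ell^q_\nu L^q_x}$. Here the $A_-$ height restriction enters critically: using $|\tilde\sigma_\la H|\le\la^{(n-1)/4+\e_1}$ pointwise together with the identity $\tilde\sigma_\la H = \sum_\nu F_\nu$ (modulo the $O(\la^{-N})$ errors in \eqref{m11}), I would rewrite
\[
\1_{A_-}\sum_\nu|F_\nu|^2 \lesssim \la^{(n-1)/4+\e_1}\,\1_{A_-}\sum_\nu|F_\nu|,
\]
and then pass to the $\ell^q_\nu$ mixed norm by H\"older combined with the finite-overlap of the $\theta_0$-directional tubes carrying the symbols $a^{\theta_0}_\nu$ provided by \eqref{m5} and \eqref{m8}.

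The main obstacle is making this last step quantitative with the correct admissible numerology, because the pointwise identity $\sum|F_\nu|^2 = \|F_\nu\|^2_{\ell^2_\nu}$ is, in general, strictly larger than $\|F_\nu\|^2_{\ell^q_\nu}$ once $q > 2$, so closing the gap requires jointly exploiting (i) the height cap on $A_-$, which acts as a surrogate for the missing $\ell^q$ orthogonality, (ii) the almost-disjoint supports of the microlocal symbols inherited from \eqref{m5}, and (iii) a careful calibration of the parameters $\e_0$ (frequency resolution) and $\e_1$ (height threshold). As flagged in the paper, for $d = n-1 \ge 3$ one can fix $\e_1 = 1/100$, whereas for $d = 2$ the exponent $\e_1$ must tend to zero as $q \to \infty$, and this is precisely the origin of the restriction $q > 4$ in \eqref{00.4'}. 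Once this calibration succeeds, combining the off-diagonal and near-diagonal bounds and taking a square root completes the proof of \eqref{b1}.
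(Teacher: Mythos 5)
Your skeleton (Whitney decomposition, bilinear estimates off the diagonal, almost-orthogonality near the diagonal) matches the paper's, but you have misassigned the role of the height restriction $A_-$, and this creates two genuine gaps. First, the bilinear gain you invoke for the off-diagonal piece does not exist at the admissible exponent: Lee's bilinear oscillatory integral estimates, as packaged in \cite{blair2023strichartz}, give a gain $(\theta\la)^{\frac{n-1}{2}(q-\frac{2(n+1)}{n-1})}$ only for $q$ below $\tfrac{2(n+1)}{n-1}$, which is strictly less than $q_e=\tfrac{2d}{d-2}$ (and less than every $q$ in \eqref{00.4'}). There is no estimate of the form $\|(\tilde\sigma_\la A^{\theta_0}_\nu H)(\tilde\sigma_\la A^{\theta_0}_{\tilde\nu}H)\|_{L^{p/2}_tL^{q/2}_x}\lesssim\la^{2/p}\theta^{\gamma}\cdots$ with $\gamma>0$ at the endpoint. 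The whole point of working on $A_-$ is to repair this: the paper writes $|\tilde\sigma_\la H|^{q_e}\le|\tilde\sigma_\la H|^{q_e-q}\bigl(|\diag(H)|^{q/2}+|\far(H)|^{q/2}\bigr)$ with $q=\tfrac{2(n+2)}{n}$, bounds the factor $|\tilde\sigma_\la H|^{q_e-q}$ by the ceiling $\la^{(\frac{n-1}{4}+\e_1)(q_e-q)}$ on $A_-$, and only then applies the bilinear estimate to $\far(H)$ at the lower exponent $q$ where the gain is real (see \eqref{ii1}--\eqref{far1}). So the height cap is essential precisely for the \emph{off}-diagonal term, which you treat without it.

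Second, your mechanism for the near-diagonal term does not work. The inequality $\1_{A_-}\sum_\nu|F_\nu|^2\lesssim\la^{\frac{n-1}{4}+\e_1}\1_{A_-}\sum_\nu|F_\nu|$ does not follow from the definition of $A_-$: the ceiling there bounds $|\sum_\nu F_\nu|=|\tilde\sigma_\la H|$, not each individual $|F_\nu|$. Even granting a pointwise bound on each piece, trading $|F_\nu|^2$ for $\la^{\frac{n-1}{4}+\e_1}|F_\nu|$ moves you from $\ell^2_\nu$ toward $\ell^1_\nu$, i.e.\ in the wrong direction for the target $\ell^q_\nu$ with $q>2$, and the prefactor $\la^{\frac{n-1}{4}+\e_1}$ is far too large to absorb. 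The paper's diagonal term never uses the height cap at all: Lemma~\ref{blemma} is a fixed-time Tao--Vargas--Vega almost-orthogonality argument, built from dual symbols $b^*_{\nu,\tilde\nu}(x,D)$ that are almost orthogonal on $L^2_x$ and bounded on $L^\infty_x$, hence on $L^r_x$ for $r=(q_e/2)'\ge2$ by interpolation (\eqref{p3}--\eqref{almost orthgonal}); this is what produces the $\ell^{q_e}_\nu$ structure directly. That interpolation is also why $d\ge4$ is needed for the basic version and why $d=3$ and $d=2$ require one or several further Whitney decompositions of the diagonal, which is the actual origin of the extra work at $q>4$ in two dimensions — not the calibration of $\e_1$ you describe. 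As written, your argument does not close either half of \eqref{b1}.
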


The $\la^{\frac1{p}-}$  notation that we are using for the last term in \eqref{b1} denotes $\la^{\frac1{p}-\e}$ for some
unspecified $\e>0$.  Note that since $\|H\|_{L^2_{t,x}}\approx T^{1/2}$ for $H=S_\la f$ and $T\approx \log\la$ the log-loss afforded by having the last term involve this norm is more than overset by the power gain $1/p-$ of $\lambda$. \eqref{b1} is the place where we require the mixed--norm to be taken over the set $A_-$. As we shall see later in the proof, the upper bound of $\tilde \sigma_\la H$ on the set $A_-$ will allow us to fully exploit the gain from bilinear estimates.

We shall postpone the proof of Proposition~\ref{locprop} until the next section.  Let us now
see how we can use it to prove \eqref{22.3'} on the set $A_-$. Given \eqref{b1}, it suffices to show that 
 when
$M^{n-1}$ has nonpositive curvature
\begin{equation}\label{22.53}
 \bigl\|
 \tilde \sigma_\la A^{\theta_0}_\nu H
\bigr\|_{L^{p}_t\ell_\nu^{q}L^{q}_x(M^{n-1}\times [0, T])} \le C \la^{\frac1p},
\end{equation}
with $T=c_0\log\la$ for $c_0>0$ sufficiently small.

We  shall also need the following simple lemma whose proof we postpone until the 
end of this subsection.

\begin{lemma}\label{comprop}  
If $\delta>0$ in \eqref{22.6} is small enough
and
$\theta_0 =\la^{-\e_0}$ we have
for 
$B$ as in \eqref{22.9}
\begin{equation}\label{cc3}
\bigl\| \, 
B \sigma_\la A^{\theta_0}_\nu -B A^{\theta_0}_\nu
\sigma_\la \, \bigr\|_{L^2_{t,x}\to{L^{p}_tL^{q}_x}}
=O(\la^{\frac1p-\frac12+2\e_0}).
\end{equation}
\end{lemma}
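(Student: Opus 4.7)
The plan is to exploit the flow-invariance \eqref{invariant} together with Egorov calculus in the exotic symbol class to obtain a sharp $L^2 \to L^2$ bound for the commutator, then to upgrade this via Bernstein-type inequalities using the spectral localization provided by $B$ and $\sigma_\la$. First, since $\sigma_\la$ is a joint function of $D_t$ and $P$, it commutes with the spectral factor $A^{\theta_0}_\ell(P)$ in the decomposition $A^{\theta_0}_\nu = A^{\theta_0}_j(x,D_x)\,A^{\theta_0}_\ell(P)$, so
\[B\sigma_\la A^{\theta_0}_\nu - BA^{\theta_0}_\nu \sigma_\la = B[\sigma_\la,\,A^{\theta_0}_j(x,D_x)]\,A^{\theta_0}_\ell(P),\]
and because $A^{\theta_0}_\ell(P)$ is bounded on $L^2$, the task reduces to estimating $\|B[\sigma_\la, A^{\theta_0}_j]\|_{L^2_{t,x}\to L^p_tL^q_x}$.

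Using the Fourier representation $\sigma(s)=\int \hat\sigma(\tau)e^{is\tau}\,d\tau$ together with the commutation of $D_t$ and $P$, we write
\[B[\sigma_\la, A^{\theta_0}_j] = \int \hat\sigma(\tau)\, e^{i\tau\la^{1/2}|D_t|^{1/2}}\, B[e^{-i\tau P}, A^{\theta_0}_j]\, \tilde\beta(D_t/\la)\,d\tau,\]
with $\tau$ confined to $[\delta-\delta_0\delta,\,\delta+\delta_0\delta]$ by the support of $\hat\sigma$. For each such $\tau$, Egorov's theorem in the exotic class $S^0_{1-\e_0,\e_0}$ (appropriate since $\theta_0=\la^{-\e_0}$) yields $e^{i\tau P} A^{\theta_0}_j e^{-i\tau P} = \mathrm{Op}(a^{\theta_0}_j \circ \chi_\tau) + R_\tau$ with $R_\tau$ in the subprincipal class $S^{-1+2\e_0}_{1-\e_0,\e_0}$, hence of $L^2\to L^2$-norm $O(\la^{-1+2\e_0})$. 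Choosing $\delta$ small enough that $|\tau|\le 2\delta$ throughout $\mathrm{supp}\,\hat\sigma$ (possible since $\delta_0<1/8$), the invariance \eqref{invariant} forces $a^{\theta_0}_j \circ \chi_\tau \equiv a^{\theta_0}_j$ on $\mathrm{supp}\,B(x,\xi)$, so the principal symbol of $B\circ(A^{\theta_0}_j - \mathrm{Op}(a^{\theta_0}_j\circ \chi_\tau))$ vanishes identically and the first non-trivial term of this composition in the exotic calculus has $L^2\to L^2$-norm $O(\la^{-1+2\e_0})$ as well. Writing $[e^{-i\tau P}, A^{\theta_0}_j] = e^{-i\tau P}(A^{\theta_0}_j - \mathrm{Op}(a^{\theta_0}_j\circ\chi_\tau) - R_\tau)$ and using that $e^{-i\tau P}$ and $e^{i\tau\la^{1/2}|D_t|^{1/2}}$ are $L^2$-isometries with $\hat\sigma\in L^1$, integration in $\tau$ yields
\[\|B[\sigma_\la, A^{\theta_0}_j]\|_{L^2_{t,x}\to L^2_{t,x}} = O(\la^{-1+2\e_0}).\]

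To upgrade to the mixed norm, observe that both $B\sigma_\la A^{\theta_0}_j$ and $BA^{\theta_0}_j\sigma_\la$ produce outputs which, up to $O(\la^{-N})$ errors, are spectrally localized in $\{P\approx\la\}\times\{|\tau|\approx\la\}$: the factor $\tilde\beta(P/\la)$ in $B$ controls the $x$-spectrum (using that $B_j\in S^0_{1,0}$ preserves dyadic spectral localization up to smoothing terms), while $\tilde\beta(D_t/\la)$ in $\sigma_\la$ controls the $t$-frequency. For such spectrally localized functions, Bernstein in $x$ combined with the admissibility relation $(n-1)(\tfrac12-\tfrac1q)=\tfrac2p$ gains $\la^{2/p}$ in passing from $L^2_x$ to $L^q_x$, while Bernstein in $t$ together with Minkowski (valid because $p\ge 2$) gains $\la^{1/2-1/p}$ in passing from $L^2_tL^2_x$ to $L^p_tL^2_x$. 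The combined factor $\la^{1/p+1/2}$ multiplied by the $L^2$-gain $\la^{-1+2\e_0}$ produces exactly $\la^{1/p-1/2+2\e_0}$, proving \eqref{cc3}. The main technical hurdle will be the Egorov analysis in the exotic class, where the calculus error is $\la^{-1+2\e_0}$ rather than the classical $\la^{-1}$; this is precisely the source of the $2\e_0$ loss in the final bound and underlies the restriction $\e_0<\tfrac12$.
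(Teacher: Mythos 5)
Your proposal follows essentially the same route as the paper: reduce to an $L^2_{t,x}\to L^2_{t,x}$ bound of size $O(\la^{-1+2\e_0})$ via the Fourier representation of $\sigma_\la$ in terms of $e^{-irP}$, kill the principal symbol of the commutator using Egorov's theorem together with the flow-invariance \eqref{invariant} for $|r|\le 2\delta$, and then upgrade to the mixed norm by frequency-localized Sobolev/Bernstein, the admissibility relation giving exactly the factor $\la^{\frac12+\frac1p}$. Your preliminary observation that $\sigma_\la$ commutes with the height factor $A^{\theta_0}_\ell(P)$, so that only $[\sigma_\la, A^{\theta_0}_j(x,D_x)]$ need be treated, is a harmless and correct simplification the paper does not bother with.

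One step is mis-ordered as written. You factor $[e^{-i\tau P}, A^{\theta_0}_j]=e^{-i\tau P}\bigl(A^{\theta_0}_j-\mathrm{Op}(a^{\theta_0}_j\circ\chi_\tau)-R_\tau\bigr)$ and then invoke the vanishing of the principal symbol of $B\circ\bigl(A^{\theta_0}_j-\mathrm{Op}(a^{\theta_0}_j\circ\chi_\tau)\bigr)$. But in your expression the propagator $e^{-i\tau P}$ sits \emph{between} $B$ and the difference, so the symbol cancellation on $\mathrm{supp}\,B$ cannot be applied directly: the difference $A^{\theta_0}_j-\mathrm{Op}(a^{\theta_0}_j\circ\chi_\tau)$ is not small on its own (the invariance only holds on $\mathrm{supp}\,B$), and discarding $e^{-i\tau P}$ by unitarity is only legitimate when it sits at the far right or far left. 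The repair is immediate: factor instead as $[e^{-i\tau P},A^{\theta_0}_j]=\bigl(e^{-i\tau P}A^{\theta_0}_j e^{i\tau P}-A^{\theta_0}_j\bigr)e^{-i\tau P}$, which is exactly the paper's arrangement; then $B$ lands directly on an operator whose principal symbol vanishes on $\mathrm{supp}\,B$ by \eqref{invariant}, and the remaining $e^{-i\tau P}$ is an $L^2$-isometry on the right. (Alternatively one could apply Egorov a second time to commute $B$ past $e^{-i\tau P}$, but that is unnecessary.) With this rearrangement your argument matches the paper's proof.
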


By \eqref{cc3} along with the fact that $\ell^{q}\subset \ell^2$ if $q\ge 2$ , we have 
\begin{equation}\label{22.60}
    \begin{aligned}
        \bigl\| &
 \tilde \sigma_\la A^{\theta_0}_\nu   H
\bigr\|_{L^{p}_t\ell_\nu^{q}L^{q}_x}
\\
&\lesssim  \bigl\| 
B A^{\theta_0}_\nu
\sigma_\la   H
\bigr\|_{L^{p}_t\ell_\nu^{q}L^{q}_x} +\| \, 
(B \sigma_\la A^{\theta_0}_\nu -B A^{\theta_0}_\nu
\sigma_\la )H\,\|_{\ell_\nu^{2}L^{p}_tL^{q}_x} 
\\
&\lesssim 
\bigl\| 
B A^{\theta_0}_\nu
\sigma_\la   H
\bigr\|_{L^{p}_t\ell_\nu^{q}L^{q}_x}+\la^{\frac1p-\frac12+2\e_0} \| \, 
H\,\|_{\ell_\nu^{2}L^2_{t,x}} .
    \end{aligned}
\end{equation}
Since the number of choices of $\nu$ is $O(\la^{(2n-3)\e_0})$ and $H$ is independent of $\nu$, we have $\| \, 
H\,\|_{\ell_\nu^{2}L^2_{t,x}}\lesssim\la^{(n-\frac32)\e_0}\| \, 
H\,\|_{L^2_{t,x}}$. Thus if we choose $\e_0<\tfrac{1}{2n+1}$,  the second term on the right side of \eqref{22.60} is bounded by 
$\la^{\frac1{p}-}\|H\|_{L^2_{t,x}}$.


On the other hand, since we are assuming $f$ is $L^2$ normalized in \eqref{normalize},  by \eqref{22.10}, \eqref{mp1}, \eqref{22.12} and the fact that $H=S_\la f$, we have
\begin{equation}\label{22.61}
\bigl\| 
B A^{\theta_0}_\nu
\big(I-\sigma_\la\big)  H
\bigr\|_{L^{p}_t\ell_\nu^{q}L^{q}_x(M^{n-1}\times [0, T])} 
\le \bigl\| 
\big(I-\sigma_\la\big)  H
\bigr\|_{L^{p}_tL^{q}_x(M^{n-1}\times [0, T])}\le \la^{\frac1{p}}.
\end{equation}
Thus,  by \eqref{22.60} we would have \eqref{22.53} if
\begin{equation}\label{22.62}
\bigl\| 
B A^{\theta_0}_\nu
   H
\bigr\|_{L^{p}_t\ell_\nu^{q}L^{q}_x(M^{n-1}\times [0, T])}\lesssim \la^{\frac1{p}},
\end{equation}
which, by \eqref{22.10} and the fact that $H=S_\la f$, is a consequence of 
\begin{equation}\label{22.622}
\bigl\| 
 A^{\theta_0}_\nu
 S_\la f
\bigr\|_{L^{p}_t\ell_\nu^{q}L^{q}_x(M^{n-1}\times [0, T])}\lesssim \la^{\frac1{p}}.
\end{equation}

To prove \eqref{22.622}, let us define 
$$f\to \bigl(Wf \bigr)(x,t,\nu)=\eta(t/T)  \bigl(A_{\nu}^{\theta_0}\circ e^{-it\la^{-1}\Delta_g}f\bigr)(x).
$$
By applying the abstract theorem of Keel-Tao \cite{KT} and a simple rescaling argument, 
 we 
would have \eqref{22.622} if
\begin{equation}\label{22.62'}
\| Wf(t,\cdot)\|_{\ell_\nu^{2}L^{2}_{x}}
\le C \|f\|_{L^2_x},
\end{equation}
and 
\begin{equation}\label{22.63'}
\| W(t)W^*(s)G\|_{\ell_\nu^{\infty}L^{\infty}_x}
\le  C\la^{\frac{n-1}2} \, |t-s|^{-\frac{n-1}2}  \|G\|_{\ell_\nu^{1}L^{1}_x},
\end{equation}
with
\begin{align}\label{22.64}
&WW^*G(x,t,\nu)=
\\
&= \eta(t/T) \sum_{\nu'} \int_{-\infty}^\infty
 \eta(s/T) \Bigl[ \bigr( A_{\nu}^{\theta_0} e^{-i(t-s)\la^{-1}\Delta_g} (A^{\theta_0}_{\nu'})^* \bigr)G(\, \cdot \, , s,\nu')\Bigr] (x) \, ds \notag
\\
&=\sum_{\nu'} \iint K(x,t,\nu;y,s,\nu') \, G(y,s,\nu') \, dyds, \notag
\end{align}
where
\begin{equation}\label{22.65}
K(x,t,\nu; y,s,\nu') =  \eta(t/T) \bigr( A_{\nu}^{\theta_0} e^{-i(t-s)\la^{-1}\Delta_g} (A^{\theta_0}_{\nu'})^* \bigr)(x,y) \, \eta(s/T).
\end{equation}

It is not hard to check that \eqref{22.62'} follows from \eqref{mp} and the fact that $e^{-it\la^{-1}\Delta_g}$ is unitary, and \eqref{22.63'} is a consequence of the kernel estimates
\begin{equation}\label{22.65np}
|K(x,t,\nu;y,s,\nu')|\le C\la^{\frac{n-1}2} \, |t-s|^{-\frac{n-1}2},
\end{equation}
 which follows from Proposition 4.2 in \cite{blair2023strichartz}.

 Here compared with \eqref{k}, there is no $e^{CT}$ loss in the above kernel estimates due to the presence of the $A_{\nu}^{\theta_0}$ operators.
The proof of \eqref{22.65np} follows from exploiting the directional localization in the $A_{\nu}^{\theta_0}$ operators through the use of Toponogov's
triangle comparison theorem.   See section 4 in \cite{blair2023strichartz}, and also recent works \cite{BSTop} and \cite{SBLog}  for more details.
 Also
note that here we take $\theta_0=\la^{-\e_0}$, which may be much larger than $\theta_0=\la^{-\frac18}$ as  in \cite{blair2023strichartz}; however, the proof of the microlocalized kernel estimates extends to the larger $\theta_0$ similarly as long as we fix $T=c_0\log\la$ for some $c_0\ll \e_0$.

This completes the proof of  \eqref{22.3'} on the set $A_-$  up to proving
the crucial local estimates in Proposition~\ref{locprop}, which we shall postpone to the  next section.

The other task remaining to complete the proofs Theorems \ref{nonposthm}   is to 
prove the  commutator estimate that we employed:

\begin{proof}[Proof of Lemma~\ref{comprop}] The proof follows from similar arguments as in Lemma 2.7 in \cite{blair2023strichartz}, we include the details here for completeness.
Recall that 
by \eqref{22.9} the symbol $B(x,\xi)=B_\la(x,\xi)
\in S^{0}_{1,0}$ vanishes when $|\xi|$ is not
comparable to $\la$.  In particular, it vanishes if $|\xi|$ is larger than a fixed
multiple of $\la$, and it belongs to a bounded subset of $S^0_{1,0}$.
Furthermore, if $a^{\theta_0}_\nu(x,\xi)$ is the principal
symbol of our zero-order dyadic microlocal operators, we recall  that by \eqref{invariant}
we have that for $\delta>0$ small enough
\begin{equation}\label{cc2}
a^{\theta_0}_\nu(x,\xi)=a^{\theta_0}_\nu(\chi_r(x,\xi))
\quad \text{on supp } \, B_\la \, \, \, 
\text{if } \, \, |r|\le 2\delta,
\end{equation}
where $\chi_r: \, T^*M^{n-1}\, \backslash 0 \to T^*M^{n-1} \, \backslash 0$ denotes
geodesic flow in the cotangent bundle.

By Sobolev estimates for $M^{n-1}\times \R$, in order
to prove \eqref{cc3}, it suffices to show that
\begin{multline}\label{cc4'}
\Bigl\|\,
\Bigl(\sqrt{I+P^2+D_t^2} \, \, \Bigr)^{(n-1)(\frac12-\frac1{q})+\frac12-\frac1p}
\,
\bigl[ B_\la \sigma_\la A^{\theta_0}_\nu
-B_\la A^{\theta_0}_\nu \sigma_\la \bigr]
\, 
\Bigr\|_{L^2_{t,x}\to L^2_{t,x}}
\\ =
O(\la^{\frac1{p}-\frac12+2\e_0}).
\end{multline}
Note that for $(p,q)$ as in \eqref{00.4'}, $(n-1)(\frac12-\frac1{q})+\frac12-\frac1p=\frac12+\frac1p$, and 
since the symbol of $B_\la$ and $\sigma_\la$ are supported in $|\xi|, |\tau|\approx \la$, in order
to prove \eqref{cc4'}, it suffices to show that
\begin{equation}\label{cc4}
\Bigl\|\,
 B_\la \sigma_\la A^{\theta_0}_\nu
-B_\la A^{\theta_0}_\nu \sigma_\la 
\, 
\Bigr\|_{L^2_{t,x}\to L^2_{t,x}}
=
O(\la^{-1+2\e_0}).
\end{equation}

To prove this we recall that
$$\sigma_\la=(2\pi)^{-1}\tilde \beta(D_t/\la)
\int \Hat \sigma(r)
e^{ir\la^{1/2}|D_t|^{1/2}} \, e^{-irP} \, dr,
$$
and, therefore, since $e^{ir\la^{1/2}|D_t|^{1/2}}$
has $L^2\to L^2$ norm one and commutes with $B_\la$ and
$A^{\theta_0}_\nu$, and since
$\Hat \sigma(r)=0$, $|r|\ge 2\delta$, by
Minkowski's integral inequality,
we would have
\eqref{cc4} if 
\begin{equation}\label{cc5}
\sup_{|r|\le 2\delta}\, 
\Bigl\|\,
\, \tilde \beta(D_t/\la) \, 
\bigl[ B_\la e^{-irP} A^{\theta_0}_\nu
-B_\la A^{\theta_0}_\nu e^{-irP} \bigr]
\, 
\Bigr\|_{L^2_{t,x}\to L^2_{t,x}}
=
O(\la^{-1+2\e_0}).
\end{equation}

Next, to be able to use Egorov's theorem, we write
$$\bigl[ B_\la e^{-irP} A^{\theta_0}_\nu
-B_\la A^{\theta_0}_\nu e^{-irP} \bigr]
=B_\la
\, \bigl[
(e^{-irP}A^{\theta_0}_\nu e^{irP}) - B_\la A^{\theta_0}_\nu]
\circ e^{-irP}.
$$
Since $e^{-irP}$ also has $L^2$-operator norm one, we
would obtain \eqref{cc5} from
\begin{equation}\label{cc6}
\Bigl\|\,
\, \tilde \beta(D_t/\la) \, 
B_\la
\, \bigl[
(e^{-irP}A^{\theta_0}_\nu e^{irP}) - A^{\theta_0}_\nu\bigr]
\, 
\Bigr\|_{L^2_{t,x}\to L^2_{t,x}} 
=
O(\la^{-1+2\e_0}),
\end{equation}
which is a simple consequence of the   Egorov's theorem, see e.g., Taylor~\cite[\S VIII.1]{TaylorPDO} and Lemma 2.7 in \cite{blair2023strichartz} for more details.
\end{proof}

\newsection{Proof of Proposition~\ref{locprop}}

 Let us fix $p=2, q=q_e=\frac{2(n-1)}{n-3}$,
we shall first give the arguments for $d=n-1\ge 4$, and later modify it for $n-1=3$. The case $d=n-1=2$ follows from similar arguments, which we shall briefly discuss at the end of this section.

To prove \eqref{b1},  the strategy is similar to \cite{blair2023strichartz}, which is related to the ideas  in Blair
and Sogge \cite{SBLog} and earlier works, especially
Tao, Vargas and Vega~\cite{TaoVargasVega} and Lee~\cite{LeeBilinear}.

We first note that if $\delta$ as in \eqref{22.6} is small enough we have
\begin{equation}\label{b2}
\tilde \sigma_\la -
\sum_\nu  \tilde \sigma_\la \Atn=R_\la,
\, \,  \text{where } \, \, 
\|R_\la H\|_{L^\infty_{t,x}}\lesssim \la^{-N}
\|H\|_{L^2_{t,x}} \, \, \forall N.
\end{equation}
Thus, we have
\begin{equation}\label{b3}
\bigl(\, \tilde \sigma_\la H\, \bigr)^2
=\sum_{\nu,\tilde \nu}
\bigl(\tilde \sigma_\la \Atn H\bigr)\cdot
\bigl(\tilde \sigma_\la \Atnt H\bigr)
+O(\la^{-N} \|H\|_{L^2_{t,x}}^2) \, \, \forall \, N.
\end{equation}

As in earlier works, let
\begin{equation}\label{b4}
\diag(H) = \sum_{(\nu,\tilde \nu)\in \xid}
\bigl(\tilde \sigma_\la \Atn H\bigr)\cdot
\bigl(\tilde \sigma_\la \Atnt H\bigr),
\end{equation}
and
\begin{equation}\label{b5}
\far(H)=\sum_{(\nu,\tilde \nu)\notin \xid}
\bigl(\tilde \sigma_\la \Atn H\bigr)\cdot
\bigl(\tilde \sigma_\la \Atnt H\bigr)
+O(\la^{-N} \|H\|_{L^2_{t,x}}^2),\end{equation}
with the last term denoting the error term in \eqref{b3}.
Thus,
\begin{equation}\label{b6}
\bigl(\tilde \sigma_\la H\bigr)^2
=\diag(H)+\far(H).
\end{equation}
Here, the summation in $\diag(H)$ is over near diagonal pairs $(\nu,\tilde \nu)\in \xid$ as in \eqref{m14}.  In particular we
have $|\nu-\tilde \nu|\le C\theta_0$ for some uniform constant
as $\nu,\tilde \nu$ range over $\theta_0{\mathbb Z}^{(2n-3)}$.
The other term $\far(H)$ is the remaining pairs, which include many which are far from the diagonal.  This sum will provide the contribution to the last term in
\eqref{b1}. 

The two types of terms here are treated differently,
as in analyzing parabolic restriction problems or 
spectral projection estimates. 

We shall treat the first term in the right of \eqref{b6} as in \cite{BHSsp} and \cite{SBLog} by using a variable coefficient variant of
Lemma 6.1 in 
\cite{TaoVargasVega} (see also Lemma 4.2 in \cite{SBLog}):

\begin{lemma}\label{blemma}  If $\diag(H)$ is as in
\eqref{b6} and $d=n-1\ge4$, then we have the uniform bounds
\begin{equation}\label{b7}
\|\diag(H)\|_{L^{1}_tL^{q_e/2}_x}
\lesssim
\| \tilde \sigma_\la
\Atn H\|^2_{L^{2}_t\ell_\nu^{q_e}L^{q_e}_x}\,
+O(\la^{1-}\|H\|_{L^2_{t,x}}^2).
\end{equation}
\end{lemma}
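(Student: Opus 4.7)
The plan is to adapt Lemma~6.1 of \cite{TaoVargasVega} and Lemma~4.2 of \cite{SBLog} to the variable-coefficient setting here. Writing $F_\nu = \tilde\sigma_\la A^{\theta_0}_\nu H$, I will first use that each $\nu$ has only $O(1)$ partners in $\Xi_{\theta_0}$ (since $|\nu-\tilde\nu|\lesssim \theta_0$ there), so a termwise Cauchy--Schwarz gives the pointwise bound $|\diag(H)(x,t)| \lesssim \sum_\nu |F_\nu(x,t)|^2$, modulo the $O(\la^{-N}\|H\|_{L^2_{t,x}}^2)$ remainder already built into \eqref{b5}. Using $\|\sum_\nu |F_\nu|^2\|_{L^{q_e/2}_x} = \|(\sum_\nu |F_\nu|^2)^{1/2}\|^2_{L^{q_e}_x}$ and integrating in $t$ will reduce the lemma to a mixed--norm square function inequality
\[
\|F\|^2_{L^2_t L^{q_e}_x \ell^2_\nu} \lesssim \|F\|^2_{L^2_t \ell^{q_e}_\nu L^{q_e}_x} + \la^{1-}\|H\|^2_{L^2_{t,x}}.
\]
This is nontrivial because $\|a\|_{\ell^{q_e}} \le \|a\|_{\ell^2}$ on sequences when $q_e \ge 2$, so the target norm is strictly smaller than what naive orthogonality provides.

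The upgrade from $\ell^2_\nu$ to the smaller $\ell^{q_e}_\nu$ will rely on the physical--space tube concentration of the $F_\nu$ already proved in Section~4 of \cite{blair2023strichartz}. After lifting to the universal cover via the Cartan--Hadamard theorem and using Toponogov comparison together with the Hadamard parametrix, the kernel of $\tilde\sigma_\la \circ A^{\theta_0}_\nu$ is concentrated, up to $\la^{-N}$ Schwartz tails, on a curved tube of transverse width $\theta_0$ about the Hamiltonian orbit indexed by $\nu$. The directional label $j$ and the height label $\ell$ separate distinct tubes, and I will exploit that the $L^{q_e}_x$ integration isolates only $O(1)$ dominant contributions at each space--time point. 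On that good set a pointwise H\"older inequality yields $\bigl(\sum_\nu |F_\nu|^2\bigr)^{1/2} \lesssim \bigl(\sum_\nu |F_\nu|^{q_e}\bigr)^{1/q_e}$, and integrating together with the Fubini identity $\|F\|_{L^{q_e}_x \ell^{q_e}_\nu} = \|F\|_{\ell^{q_e}_\nu L^{q_e}_x}$ will produce the main term on the right of the claim.

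The error contributions---the high--overlap subset, the $\la^{-N}$ Schwartz tails, and the $R_\la$ term in \eqref{b2}---will be absorbed into $O(\la^{1-}\|H\|^2_{L^2_{t,x}})$ by interpolating the trivial $L^2_{t,x}\to L^\infty_{t,x}$ bound of order $\la^{(n-1)/2}$ for $\tilde\sigma_\la$ against the rapid decay of the tails, together with the almost--orthogonality \eqref{m12}.

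The hard part will be quantifying the ``$O(1)$ dominant tubes after $L^{q_e}_x$--integration'' step, since the naive pointwise count allows $\sim \theta_0^{-(n-1)}$ tubes through a fixed point, so bounded overlap cannot hold pointwise. The key geometric input will be the linear divergence of geodesics on the universal cover under nonpositive curvature, which decouples the contributions of overlapping tubes at the level of $L^{q_e}_x$ integration at an $O(\la^\e)$ cost that fits inside the $\la^{1-}$ slack. The hypothesis $d \ge 4$ makes the Hölder exponent $q_e/2$ small enough for this numerology to close cleanly; the $d=3$ and $d=2$ cases will require the separate adjustments the authors flagged at the start of this section.
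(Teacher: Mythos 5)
Your proposal misidentifies the mechanism behind this lemma. Lemma~\ref{blemma} is a purely local, harmonic--analysis statement---the variable--coefficient analog of Lemma~6.1 in \cite{TaoVargasVega}---and its proof uses no curvature assumptions at all. Toponogov comparison, geodesic tube concentration on the universal cover, and the ``linear divergence of geodesics'' that you invoke play no role here; those inputs enter only in the global kernel estimates (e.g., \eqref{k} and \eqref{22.65np}) used in Section~2. By pulling in that geometric machinery you are effectively trying to re-derive a statement whose proof is actually soft.

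The genuine gap is that you never set up the duality argument, which is the whole content of the lemma. After reducing (via the commutator estimate \eqref{b13} and time--freezing) to the spatial inequality for a fixed time slice, the paper tests the bilinear sum $\sum_{(\nu,\tilde\nu)\in\xid}\Atn h_\nu\cdot\Atnt h_{\tilde\nu}$ against $G$ with $\|G\|_{L^r_x}=1$, $r=(q_e/2)'$, and exploits that each product $\Atn h_\nu\cdot\Atnt h_{\tilde\nu}$ is frequency--localized to a cube of sidelength $\approx\theta_0\la$ depending on $(\nu,\tilde\nu)$, with bounded overlap. Choosing symbols $b_{\nu,\tilde\nu}(x,\xi)$ adapted to these cubes, the adjoints $b^*_{\nu,\tilde\nu}(x,D)$ are almost orthogonal in $L^2$ and uniformly bounded on $L^\infty$; interpolating gives $\bigl(\sum_{\nu,\tilde\nu}\|b^*_{\nu,\tilde\nu}(x,D)G\|^r_{L^r}\bigr)^{1/r}=O(1)$ precisely when $r\ge2$, i.e., $q_e\le4$, i.e., $d\ge4$. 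This single H\"older step is what converts the bilinear sum into the $\ell^{q_e/2}$ quantity on the right of \eqref{b7}. Your replacement step---``$O(1)$ dominant tubes after $L^{q_e}_x$-integration, giving a pointwise H\"older $(\sum_\nu|F_\nu|^2)^{1/2}\lesssim(\sum_\nu|F_\nu|^{q_e})^{1/q_e}$ on a good set''---is not a claim you have tools to prove (you concede the pointwise overlap count is $\theta_0^{-(n-1)}$), and it cannot be made sense of without an actual mechanism to absorb the bad overlap; the paper sidesteps this entirely by working in frequency rather than physical space. Relatedly, your explanation for the $d\ge4$ restriction (a tube--overlap numerology) is not the right one; the restriction comes from needing $(q_e/2)'\ge2$ so that the interpolation between $L^2$ almost--orthogonality and $L^\infty$ boundedness of the $b^*_{\nu,\tilde\nu}(x,D)$ closes. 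Finally, you have also dropped the reduction from $\diag(H)$ to $\tdiag(H)$ (fattening $\Atn$ to $\Atn\tAtn$ using \eqref{tsigma}) and the commutator step \eqref{b13}, both of which are where the $O(\la^{1-}\|H\|^2_{L^2_{t,x}})$ error genuinely comes from; your proposed accounting of the error via interpolating the trivial $\la^{(n-1)/2}$ bound against Schwartz tails does not produce the stated loss.
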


We also need the following estimate for
$\far(H)$ which is a consequence of the  bilinear
oscillatory integral estimates of Lee~\cite{LeeBilinear}
and arguments of Blair and Sogge in \cite{BlairSoggeRefined}, \cite{blair2015refined} and \cite{SBLog}.

\begin{lemma}\label{leelemma}
If $\far(H)$ is as in \eqref{b5}, and, as above
$\theta_0=\la^{-\e_0}$, then for all $\e>0$ we have
for $H=S_\la f$
\begin{equation}\label{b8}
\iint |\far(H)|^{q/2}\, dxdt \lesssim_{\e}
\la^{1+\e} \, \bigl(\la^{1-\e_0}\bigr)^{\frac{n-1}2
(q-\frac{2(n+1)}{n-1}))} \, \|H\|_{L^2_{t,x}}^{q}, \quad
\text{if } \, q=\tfrac{2(n+2)}n.
\end{equation}
\end{lemma}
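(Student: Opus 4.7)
The plan is to organize $\far(H)$ using the Whitney decomposition \eqref{m14} and, for each dyadic scale, invoke a variable-coefficient bilinear restriction estimate at Lee's exponent $q=\tfrac{2(n+2)}{n}$.

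First I would write $\far(H)=\sum_{k\ge 10}\far_k(H)+O(\la^{-N}\|H\|_{L^2_{t,x}}^2)$, where at scale $\theta=2^k\theta_0$ the term $\far_k(H)$ sums over close pairs $\tau^\theta_\mu\sim\tau^\theta_{\tilde\mu}$ in the Whitney decomposition. Since the operators $A^{\theta_0}_\nu$ with $\nu\in\tau^\theta_\mu$ are concentrated on a $\theta$-tube around a common geodesic, I would regroup $\sum_{\nu\in\tau^\theta_\mu}\tilde\sigma_\la A^{\theta_0}_\nu H$ into a single coarser piece $\tilde\sigma_\la A^\theta_\mu H$ built as in \eqref{m10} but at the larger scale $\theta$, so that $\far_k(H)=\sum_{\mu\sim\tilde\mu}(\tilde\sigma_\la A^\theta_\mu H)(\tilde\sigma_\la A^\theta_{\tilde\mu} H)$.

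The key input is then the bilinear estimate
\begin{equation*}
\bigl\|(\tilde\sigma_\la A^\theta_\mu H)(\tilde\sigma_\la A^\theta_{\tilde\mu} H)\bigr\|_{L^{q/2}_{t,x}}
\lesssim_\e \la^{\e}\,\la^{a(n)}\,\theta^{b(n)}\,\|A^\theta_\mu H\|_{L^2_{t,x}}\|A^\theta_{\tilde\mu} H\|_{L^2_{t,x}},\qquad \mu\sim\tilde\mu,
\end{equation*}
with $q=\tfrac{2(n+2)}{n}$ and exponents $a(n),b(n)$ dictated by Lee's bilinear restriction theorem for paraboloids. The symbol of $\tilde\sigma_\la$ is supported in a neighborhood of the Schr\"odinger characteristic set $\{\tau=|\xi|^2/\la\}$, which is paraboloidal after rescaling, so the situation falls within the variable-coefficient framework of Lee~\cite{LeeBilinear}. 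I would take this bilinear estimate directly from the analogous analysis in \cite{blair2023strichartz}. Raising to the $q/2$ power, summing in the close pair $(\mu,\tilde\mu)$ via Cauchy--Schwarz, and using the $L^2$ almost-orthogonality \eqref{m12} of $\{A^\theta_\mu H\}_\mu$ produces
\begin{equation*}
\iint|\far_k(H)|^{q/2}\,dxdt\lesssim_\e\la^{\e}\,\la^{(q/2)a(n)}\,\theta^{(q/2)b(n)}\,\|H\|_{L^2_{t,x}}^{q}.
\end{equation*}
The exponent $b(n)$ is positive at Lee's exponent, so the geometric series in $k$ is dominated by the smallest allowable $\theta$, namely $\theta\approx\theta_0=\la^{-\e_0}$; a routine check of the Lee exponents then yields exactly the loss $(\la^{1-\e_0})^{\frac{n-1}{2}(q-\frac{2(n+1)}{n-1})}$ appearing in \eqref{b8}, while the factor $\la^{1+\e}$ encodes the flat dispersive scaling together with an $\e$-loss absorbed from the bilinear estimate.

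The only nontrivial step is the variable-coefficient bilinear restriction estimate itself: Lee's theorem must be applied uniformly in $\la$, with the correct $\theta$-dependence, to a family of microlocalized Schr\"odinger propagators on the curved background $(M^{n-1},g)$. This is carried out in detail in \cite{blair2023strichartz} at the fixed scale $\theta_0=\la^{-1/8}$; the extension to a variable $\theta_0=\la^{-\e_0}$ for small $\e_0$ introduces no new obstruction, so the proof reduces to invoking that bilinear estimate and performing the dyadic summation described above.
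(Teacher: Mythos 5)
Your proposal follows essentially the same route as the paper: the paper itself gives only a two-line argument that localizes in time to unit intervals via the cutoffs $\alpha_m(t)=\alpha(t-m)$ (accepting a $\log\la$ loss absorbed into $\la^\e$) and then cites Lemma 3.2 of \cite{blair2023strichartz}, replacing the frequency scale $\la^{7/8}$ there by $\la^{1-\e_0}$; the content of that cited lemma is precisely the Whitney-decomposition-plus-Lee-bilinear scheme you sketch. Two small points worth correcting in your write-up. First, you omit the reduction to unit time intervals, which is needed before one can invoke the bilinear oscillatory integral estimate from \cite{blair2023strichartz} (the $\tilde\sigma_\la$ kernel has bounded time propagation, but the reference estimate is stated on unit intervals); this is where the paper's harmless $\log\la\lesssim\la^\e$ factor enters. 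Second, your statement that ``the exponent $b(n)$ is positive at Lee's exponent'' has the sign wrong: at $q=\tfrac{2(n+2)}{n}$ one computes $\tfrac{n-1}{2}\bigl(q-\tfrac{2(n+1)}{n-1}\bigr)=-\tfrac{2}{n}<0$, so the per-scale bound behaves like $(\theta\la)^{-2/n}$, and it is precisely because this power is \emph{negative} that the geometric sum over $k$ is dominated by the smallest scale $\theta\approx\theta_0=\la^{-\e_0}$, which is the conclusion you correctly state. The summation in $(\mu,\tilde\mu)$ via almost-orthogonality and the embedding $\ell^2\hookrightarrow\ell^q$ (valid since $q\ge 2$) is fine once each $\mu$ is paired with $O(1)$ close $\tilde\mu$'s; the regrouping $\sum_{\nu\in\tau^\theta_\mu}A^{\theta_0}_\nu\approx A^\theta_\mu$ is plausible but would require a short justification (the paper instead, in its $n-1=3$ variant, inserts a coarse cutoff $A^{c_0\theta}_{\mu'}$ rather than regrouping). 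None of these affect the substance: the approach matches the paper's.
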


Let us postpone the proofs of these two lemmas for a
bit and show how they can be used to obtain
Proposition~\ref{locprop} if $d=n-1\ge4$.

If we let $q=\tfrac{2(n+2)}n$ as in Lemma~\ref{leelemma}, we note that
$q<q_e$ and also
\begin{equation}| \tilde \sigma_\la H \cdot
 \tilde \sigma_\la H|^{q_e}
\le 2^{q/2}
\, |\tilde \sigma_\la H \cdot
 \tilde \sigma_\la H|^{\frac{q_e-q}2}
\cdot \bigl( \,
|\diag(H)|^{q/2}+|\far(H)|^{q/2}\, \bigr).
\end{equation}
Thus,
\begin{align}\label{b9}
\| &\tilde \sigma_\la H
\|_{L^{2}_tL^{q_e}_x(A_-)}^2
=\int \Big(\int\bigl|  \tilde \sigma_\la H \cdot
 \tilde \sigma_\la H\bigr|^{q_e/2} \, dx\Big)^{\frac{2}{q_e}}dt
\\
&\lesssim \notag
\int\Big(\int | \tilde \sigma_\la H \cdot
 \tilde \sigma_\la H|^{\frac{q_e-q}2} \, 
|\diag(H)|^{q/2} \, dx\Big)^{\frac{2}{q_e}}dt
\\
&\quad +\int\Big(\int | \tilde \sigma_\la H \cdot
\tilde \sigma_\la H|^{\frac{q_e-q}2} \, 
|\far(H)|^{q/2} \, dx\Big)^{\frac{2}{q_e}}dt  =I+II.
\notag
\end{align}

To estimate $II$, first note that $\tilde \sigma_\la
H=\tilde S_\la f$ if $H=S_\la f$, we have 
\begin{equation}\label{ii1}
\begin{aligned}
     II &\lesssim \|\tilde S_\la f\|_{L^\infty(A_-)}^{\frac{2(q_e-q)}{q_e}} \cdot 
\|\far(H)\|_{L^{\frac{q}{q_e}}_{t}L^{\frac q2}_x}^{\frac{q}{q_e}} \\
&\lesssim T^{(\frac{q_e}{q}-\frac{2}{q})\cdot\frac{q}{q_e}} \|\tilde S_\la f\|_{L^\infty(A_-)}^{\frac{2(q_e-q)}{q_e}} \cdot 
\|\far(H)\|_{L^{\frac{q}{2}}_{t}L^{\frac q2}_x}^{\frac{q}{q_e}},
\end{aligned}
\end{equation}
where in the second line we used H\"older's inequality on the time interval $[0,T]$. By \eqref{b8} and  the ceiling
for $A_-$, we have 
\begin{equation}\label{far}
\begin{aligned}
    II \le  T^{(\frac{q_e}{q}-\frac{2}{q})\cdot\frac{q}{q_e}} \la ^{(\frac{n-1}4+\e_1)(\frac{2(q_e-q)}{q_e})}
\cdot \Big(\la^{1+\e} \, \bigl(\la^{1-\e_0}\bigr)^{\frac{n-1}2
(q-\frac{2(n+1)}{n-1})} \Big)^{\frac{2}{q_e}}\|H\|^{\frac{2q}{q_e}}_{L^2_{x}}.
\end{aligned}
\end{equation}
If we take $\e_0, \e_1$ and $\e$ to be small enough, e.g., $\e=\e_1=\frac1{100}$ and $\e_0=\frac{1}{2n+2}$, it is straightforward to check that 
\begin{equation}\label{far1}
\begin{aligned}
    II 
=O(\la^{1-}\|H\|_{L^2_{t,x}}^{\frac{2q}{q_e}})=O(\la^{1-}\|H\|_{L^2_{t,x}}^{2}).
\end{aligned}
\end{equation}
Here we also  used the fact that 
$\|H\|_{L^2_{t,x}}^{2}$ dominates 
$\|H\|_{L^2_{t,x}}^{\frac{2q}{q_e}}$ since $q_e>q$ and
$\|H\|_{L^2_{t,x}}\approx T$ since $H=S_\la f$, $\|f\|_2=1$
and $e^{-it\la^{-1}\Delta_g}$ is a unitary operator on $L^2_x$.  

Consequently, we just need to see that $I^{1/2}$ is
dominated by the other term in the right side of this inequality
To estimate this term we
use H\"older's inequality followed by Young's 
inequality and Lemma~\ref{blemma} to see that
\begin{align*}
I=&
\int\Big(\int | \tilde \sigma_\la H \cdot
 \tilde \sigma_\la H|^{\frac{q_e-q}2} \, 
|\diag(H)|^{\frac q2} \, dx\Big)^{\frac{2}{q_e}}dt \\
\le & \int\Big( \| \tilde \sigma_\la H \cdot
 \tilde \sigma_\la H\|_{L_x^{\frac{q_e}{2}}}^{\frac{q_e-q}2} \, 
\|\diag(H)\|_{L_x^{\frac{q_e}{2}}}^{\frac q2} \,\Big)^{\frac{2}{q_e}}dt \\
\le &\| \tilde \sigma_\la H \cdot
 \tilde \sigma_\la H\|_{L^1_tL_x^{\frac{q_e}{2}}}^{\frac{q_e-q}{q_e}} \, 
\|\diag(H)\|_{L^1_tL_x^{\frac{q_e}{2}}}^{\frac{q}{q_e}} \\
\le &\tfrac{q_e-q}{q_e}\| \tilde \sigma_\la H \|^2_{L^2_tL_x^{q_e}} +\tfrac{q}{q_e}
\|\diag(H)\|_{L^1_tL_x^{\frac{q_e}{2}}}\\
\le &\tfrac{q_e-q}{q_e}\| \tilde \sigma_\la H \|^2_{L^2_tL_x^{q_e}} +C(
\| \tilde \sigma_\la
\Atn H\|^2_{L^{2}_t\ell_\nu^{q_e}L^{q_e}_x}\,
+\la^{1-}\|H\|_{L^2_{t,x}}^2).
\end{align*}
Since $\tfrac{q_e-q}{q_e}<1$, the first term
in the right can be absorbed in the left
side of \eqref{b9}, and this, along with
the estimate for $II$ above yields \eqref{b1}.

Thus, if we can prove Lemma~\ref{blemma} and
Lemma~\ref{leelemma}, the proof of Proposition~\ref{locprop}
for $d=n-1\ge4$
 will be complete.

\medskip

\begin{proof}[\textbf{Proof of Lemma~\ref{blemma}}]

To prove \eqref{b7}, let us first define slightly wider microlocal
cutoffs by setting
\begin{equation}\label{tanu}
    \tAtn =\sum_{|\mu-\nu|\le C_0 \theta_0}A^{\theta_0}_\mu.
\end{equation}
We can fix $C_0$ large enough so that
\begin{equation}\label{bb10}
\|\Atn-\Atn\tAtn\|_{L^p_x\to L^p_x}=O(\la^{-N})
\, \, \forall \, N\, \, \text{if } \,
\, 1\le p\le \infty.
\end{equation}
Also, like the original $\Atn$ operators
the $\tAtn$ operators are almost orthogonal
\begin{equation}\label{bb11}
\sum_\nu \|\tAtn h\|^2_{L^2_x}\lesssim
\|h\|_{L^2_x}^2.
\end{equation}

Since by \eqref{22.10} and \eqref{qu1}, we have
\begin{equation}\label{tsigma}
    \|\tilde \sigma_\la F\|_{L^{2}_tL^{q_e}_x(M^{n-1}\times [0, T])}\le C\la^{\frac1{2}}\|F\|_{L^2_{t,x}},
\end{equation}
we conclude that, in order to prove \eqref{b7}, we may replace $\diag(H)$ by $\tdiag(H)$ where
the latter is defined by the analog of 
\eqref{b4} with $\Atn$ and $\Atnt$ 
replaced by $\Atn \tAtn$ and $\Atnt
\tAtnt$, respectively.

So, it suffices to prove
\begin{equation}\label{b12}
    \begin{aligned}
        \bigl\|\sum_{(\nu,\tilde \nu)\in \xid}
(\tilde \sigma_\la
\Atn\tAtn H)\cdot 
(\tilde \sigma_\la
\Atnt\tAtnt &H)\bigr\|
_{L^{1}_tL^{q_e/2}_x} \\
&\lesssim
\| \tilde \sigma_\la
\Atn H\|^2_{L^{2}_t\ell_\nu^{q_e}L^{q_e}_x}\,
+O(\la^{1-}\|H\|_{L^2_{t,x}}^2).
    \end{aligned}
\end{equation}

We shall need the following variant of  \eqref{cc3},
\begin{equation}\label{b13}
\|[\, \tilde \sigma_\la
\Atn-\Atn\tilde \sigma_\la\, ]F\|_{L^{2}_tL^{q_e}_x}
\lesssim \la^{2\e_0}\|F\|_{L^2_{t,x}}.
\end{equation}
This follows from the proof of Lemma~\ref{comprop}, \eqref{cc3} and the fact that the commutator
$[B,A^{\theta_0}_\nu]$ is bounded on $L^{q_e}_x(M^{n-1})$ with norm
$O(\la^{-1+\e_0})$. 
By \eqref{bb11} and \eqref{b13} we 
would have \eqref{b12} if we could show that
\begin{equation}\label{b14}
    \begin{aligned}
        \bigl\|\sum_{(\nu,\tilde \nu)\in \xid}
(\Atn\tilde \sigma_\la
\tAtn H)\cdot 
(\Atnt\tilde \sigma_\la
\tAtnt &H)\bigr\|
_{L^{1}_tL^{q_e/2}_x} \\
&\lesssim
\| \tilde \sigma_\la
\Atn H\|^2_{L^{2}_t\ell_\nu^{q_e}L^{q_e}_x}\,
+O(\la^{1-}\|H\|_{L^2_{t,x}}^2).
    \end{aligned}
\end{equation}

Since the $\Atn$ operators are time independent, we claim that, it suffices to show that for arbitrary $h_{\nu}, h_{\tilde \nu}$, which may depend on $\nu$ and $\tilde\nu$,
\begin{equation}\label{b14'}
    \begin{aligned}
        \bigl\|\sum_{(\nu,\tilde \nu)\in \xid}
\Atn h_{ \nu}\cdot 
\Atnt h_{\tilde \nu}\bigr\|
_{L^{q_e/2}_x} \lesssim &
\Bigl(
\sum_{(\nu,\tilde \nu)\in \xid}
\| \Atn  h_{ \nu}\cdot 
\Atnt  h_{\tilde \nu}\|_{L^{q_e/2}_{x}}^{q_e/2}\,
\Bigr)^{2/q_e} \\
&\qquad
+O\big(\la^{-N}\sum_{(\nu,\tilde \nu)\in \xid}\|h_{\nu}\|_{L^1_x}\|h_{\tilde \nu}\|_{L^1_x}\big),\,\,\,\forall N.
    \end{aligned}
\end{equation}

To verify the claim, note that if we take 
$ h_{ \nu}=\tilde\sigma_\la
\tAtn H$ and $ h_{\tilde \nu}=\tilde\sigma_\la
\tAtnt H$, \eqref{b14'} implies 
\begin{equation}\label{b14''}
    \begin{aligned}
        \bigl\|\sum_{(\nu,\tilde \nu)\in \xid}&
(\Atn\tilde \sigma_\la
\tAtn H)\cdot 
(\Atnt\tilde \sigma_\la
\tAtnt H)\bigr\|
_{L^{1}_tL^{q_e/2}_x} \\
&\lesssim \int
\Bigl(
\sum_{(\nu,\tilde \nu)\in \xid}
\| (\Atn\tilde \sigma_\la
\tAtn H)\cdot 
(\Atnt\tilde \sigma_\la
\tAtnt H)\|_{L^{q_e/2}_{x}}^{q_e/2} \,
\Bigr)^{2/q_e} dt\, \\&\qquad
+\la^{-N}\int \sum_{(\nu,\tilde \nu)\in \xid}\|(\Atn\tilde \sigma_\la
\tAtn H)\|_{L^1_x}\|(\Atnt\tilde \sigma_\la
\tAtnt H)\|_{L^1_x} \,dt. \\
&\lesssim  \| \Atn\tilde\sigma_\la 
\tAtnt H\|^2_{L^{2}_t\ell_\nu^{q_e}L^{q_e}_x}\,
+O(\la^{1-}\|H\|_{L^2_{t,x}}^2).
    \end{aligned}
\end{equation}
Here  we used the fact that for fixed $\nu$, the number of choices of $\tilde\nu$ is finite, and for 
 each pair $(\nu,\tilde \nu)\in \xid$
\begin{multline}
  \int \|(\Atn\tilde \sigma_\la
\tAtn H)\|_{L^1_x}\|(\Atnt\tilde \sigma_\la
\tAtnt H)\|_{L^1_x}\, dt \\ \le \int \|(\Atn\tilde \sigma_\la
\tAtn H)\|_{L^2_x}\|(\Atnt\tilde \sigma_\la
\tAtnt H)\|_{L^2_x}\, dt=O(\|H\|_{L^2_{t,x}}^2).
\end{multline}
If we repeat earlier arguments and use
\eqref{bb10} again, we conclude that the first term in the 
right side of \eqref{b14''} is dominated by the right side of \eqref{b14}.

Thus, it remains to prove \eqref{b14'}. By duality, 
if we take
$r=(q_e/2)'$ so that $r$ is the conjugate
exponent for $q_e/2$, \eqref{b14'} is equivalent to 
\begin{equation}\label{b15}
    \begin{aligned}
       \Bigl| \sum_{(\nu,\tilde \nu)\in \xid}&
\iint \Atn h_{ \nu}\cdot 
\Atnt h_{\tilde \nu} \, \cdot G \, dx\, \Bigr|  \\ &\lesssim 
\Bigl(
\sum_{(\nu,\tilde \nu)\in \xid}
\| \Atn  h_{ \nu}\cdot 
\Atnt  h_{\tilde \nu}\|_{L^{q_e/2}_{x}}^{q_e/2}\,
\Bigr)^{2/q_e} 
\\&\qquad\qquad\qquad+O\big(\la^{-N}\sum_{(\nu,\tilde \nu)\in \xid}\|h_{\nu}\|_{L^1_x}\|h_{\tilde \nu}\|_{L^1_x}\big)\,\,\, \text{if } \, \|G\|_{L^r_{x}}=1.
    \end{aligned}
\end{equation}


To prove \eqref{b15},
note that if $x$ and $\nu$ are fixed and 
$\xi \to \Atn(x,\xi)$ does not vanish identically, then this function of $\xi$ is supported in a cube $Q^{\theta_0}_\nu(x)\subset {\mathbb R}^{n-1}_\xi$ of sidelength $\approx \la^{1-\e_0}$.
The cubes can be chosen so that, if $\eta_\nu(x)$ is its center, then $\partial^\gamma_x\eta_\nu(x)=O(\la)$ for all 
multi-indices $\gamma$.  Keeping this in mind it
is straightforward to construct for every
pair $(\nu,\tilde \nu)\in \xid$ symbols
$b_{\nu,\tilde \nu}(x,\xi)$ belonging to 
a bounded subset of $S^0_{1-\e_0,\e_0}$ satisfying
\begin{equation}\label{p1}
b_{\nu,\tilde \nu}(x,\eta)=1 \, \, \text{if } \, \,
\text{dist}\bigl(\eta, \, \text{supp}_\xi \Atn(x,\xi) \, +\, \text{supp}_\xi \Atnt(x,\xi)\bigr)
\le \la^{1-\e_0},
\end{equation}
with ``$+$'' denoting the algebraic sum.  Using this and a simple integration by parts argument shows that for every pair $(\nu,\tilde \nu)\in \xid$
\begin{equation}\label{p2}
\bigl\| (I-b_{\nu,\tilde \nu}(x,D))\bigl[
\Atn h_{ \nu} \cdot \Atnt h_{\tilde \nu}]\bigr] \bigr\|_{L^\infty_x}
\le C_N \la^{-N}\|h_{ \nu}\|_{L^1_x}\|h_{\tilde \nu}\|_{L^1_x}, \quad
\forall \, N.
\end{equation}
The symbols can also be chosen so that
$b_{\nu_1,\tilde \nu_1}(x,\xi)$ and
$b_{\nu_2,\tilde \nu_2}(x,\xi)$ have disjoint
supports if $(\nu_j,\tilde \nu_j)\in \xid$,
$j=1,2$ and $\min(|(\nu_1-\nu_2,\tilde \nu_1
-\tilde \nu_2)|, \, |(\nu_1-\tilde \nu_2,
\tilde \nu_1-\nu_2)|)\ge C_2 \theta_0$ with 
$C_2$ being a fixed constant independent of 
$\la$ since all pairs in $\xid$ are nearly
diagonal.  Due to this, the adjoints, 
$b^*_{\nu,\tilde \nu}(x,D)$ are almost orthogonal 
in the sense that we have the uniform bounds
\begin{equation}\label{p3}
\sum_{(\nu,\tilde \nu)\in \xid}
\|b^*_{\nu,\tilde \nu}(x,D)h\|^2_{L^2_x}\lesssim
\|h\|^2_{L^2_x}.
\end{equation}
Since $\text{supp}_\xi \Atn(x,\xi) \, +\, \text{supp}_\xi \Atnt(x,\xi)$ is contained
in a cube of sidelength $\approx \la^{1-\e_0}$
and can be chosen to have center $\eta_{\nu,\tilde \nu}(x)$ satisfying 
$\partial^\gamma_x \eta_{\nu,\tilde \nu}(x)
=O(\la)$, we can furthermore assume that
we have the uniform bounds
\begin{equation}\label{p4}
\sup_{(\nu,\tilde \nu)\in \xid}
\|b^*_{\nu,\tilde \nu}(x,D)h\|_{L^\infty_x}
\lesssim \|h\|_{L^\infty_x}.
\end{equation}

We have now set up our variable coefficient 
version of the simple argument in 
\cite{TaoVargasVega} that will allow us
to obtain \eqref{b15}.  First, by \eqref{p2}, it suffices to estimate
the left side of \eqref{b15} with $G$ replaced by $ b^*_{\nu,\tilde \nu}(x,D)G $. By H\"older's inequality, 
\begin{multline}\label{b16}
\Bigl| \sum_{(\nu,\tilde \nu)\in \xid}
\iint (\Atn  h_{ \nu}\cdot 
\Atnt  h_{\tilde \nu} \, \cdot \bigl( b^*_{\nu,\tilde \nu}(x,D)G \bigr)\, dx\, \Bigr|
\\
\le 
\Bigl(
\sum_{(\nu,\tilde \nu)\in \xid}
\| \Atn  h_{ \nu}\cdot 
\Atnt  h_{\tilde \nu}\|_{L^{q_e/2}_{x}}^{q_e/2}\,
\Bigr)^{2/q_e}
\cdot \Bigl(
\sum_{(\nu,\tilde \nu)\in \xid}
\| b^*_{\nu,\tilde \nu}(x,D)G\|_{L^{r}_{x}}^r
\Bigr)^{1/r}.
\end{multline}

Note that when $n-1\ge 4$, $q_e\in (2,4]$, thus $r\in [2,\infty)$. So, if we use \eqref{p3},
\eqref{p4} and an interpolation argument
we conclude that
\begin{equation}\label{almost orthgonal}
   \Bigl(
\sum_{(\nu,\tilde \nu)\in \xid}
\| b^*_{\nu,\tilde \nu}(x,D)G\|_{L^{r}_{x}}^r
\Bigr)^{1/r}=O(1),
\end{equation}
for $G$ as in \eqref{b15}. 
\end{proof}

\begin{proof}[\textbf{Proof of Lemma~\ref{leelemma}}]
To prove this let 
$\alpha\in C^\infty_0((-1,1))$ with $\alpha\equiv 1 $ in $(-\frac12, \frac12)$, then if $\alpha_m(t)=\alpha(t-m)$, up to a $\log\la$ loss, it suffices to show that for $m=1,2,\dots, T$
\begin{equation}\label{b8'}
\iint |\alpha_m(t)\far(H)|^{q/2}\, dxdt \lesssim_{\e}
\la^{1+\e} \, \bigl(\la^{1-\e_0}\bigr)^{\frac{n-1}2
(q-\frac{2(n+1)}{n-1})} \,  \|H\|_{L^2_{t,x}}^{q}.
\end{equation}

This  follow from Lemma 3.2 in \cite{blair2023strichartz}, where we replace
$\la^{7/8}$ there by $\la^{1-\e_0}$. The proof uses bilinear
oscillatory integral estimates of Lee~\cite{LeeBilinear}
and arguments of previous work of Blair and Sogge in \cite{BlairSoggeRefined}, \cite{blair2015refined} and \cite{SBLog}. And this is also where we used the condition on $\delta, \delta_0$ in \eqref{22.6}, in order to apply the bilinear oscillatory integral theorems of Lee ~\cite{LeeBilinear}, see section 3 in \cite{blair2023strichartz} for more details. 

The constants in the right side of \eqref{b8'} is better than applying  classical linear estimate.  
Actually one can also rewrite the right side as 
\begin{equation}\label{b8'''}
\la^{\frac{n-1}{2}(\frac q2-1)}\la^{\e} \, \bigl(\la^{\frac12-\e_0}\bigr)^{\frac{n-1}2
(q-\frac{2(n+1)}{n-1})} \, \|H\|_{L^2_{t,x}}^{q},
\end{equation}
thus if $\e_0\le 1/2$, we have a gain compared with the bound $\la^{\frac{n-1}{2}(\frac q2-1)}$, which follows from applying  linear estimates.
\end{proof}
\medskip

\noindent{\bf Modified arguments for $n-1=3$}.

When $n-1=3$, note that $q_e=\tfrac{2(n-1)}{n-3}=6$, thus $r=(q_e/2)'=\frac32$, which means that we do not have \eqref{almost orthgonal} and thus Lemma~\ref{blemma} in this case, so we need to modify the arguments above. If we repeat the previous arguments, it suffices to estimate the first term
\begin{equation}\label{bn2}
I=
\int\Big(\int | \tilde \sigma_\la H \cdot
 \tilde \sigma_\la H|^{\frac{q_e-q}2} \, 
|\diag(H)|^{\frac q2} \, dx\Big)^{\frac{2}{q_e}}dt,
\end{equation}
where as in \eqref{b4},
\begin{equation}\nonumber
\diag(H) = \sum_{(\nu,\tilde \nu)\in \xid}
\bigl(\tilde \sigma_\la \Atn H\bigr)\cdot
\bigl(\tilde \sigma_\la \Atnt H\bigr).
\end{equation}
For later use, note that when $n-1=3$, it is not hard to check that $ \,(\nu,\tilde \nu)\in
\Xi_{\theta_0}$ implies $|\nu-\tilde\nu|\le 2^{13}\theta_0$.

Let us define $$T_\nu H=\sum_{\tilde\nu: \,(\nu,\tilde \nu)\in
\Xi_{\theta_0}}( \tilde\sigma_\la A^{\theta_0}_\nu H )(\tilde\sigma_\la A^{\theta_0}_{\tilde\nu} H),$$
and write
\begin{equation}\label{b255}
\begin{aligned}
    ( \diag(H))^{2} &=\big(\sum_\nu T_\nu H\big)^2
    \\&= \sum_{\nu_1, \nu_2}  T_{\nu_1} HT_{\nu_2} H.
    \end{aligned}
\end{equation}

Now we can employ another Whitney-type decomposition, more explicitly,  the
sum in \eqref{b255} can be organized as
\begin{equation}\label{org}
\begin{aligned}
    &\big( \sum_{\{k\in {\mathbb N}: \, k\ge 20 \, \, \text{and } \, 
\theta=2^k\theta_0\ll 1\}} \,  \, 
\sum_{\{(\mu_1, \mu_2): \, \tau^\theta_{\mu_1}
\sim \tau^\theta_{ \mu_2}\}}
\sum_{\{(\nu_1,\nu_2)\in
\tau^\theta_{\mu_1}\times \tau^\theta_{\mu_2}\}}
+\sum_{(\nu_1,  \nu_2)\in \overline{\Xi}_{\theta_0}}
\big)T_{\nu_1} hT_{\nu_2} h \\
&={\overline\Upsilon^{\text{far}}}(H)+{\overline\Upsilon^{\text{diag}}}(H),
\end{aligned}
\end{equation}
where $\tau^\theta_{\mu_1}\sim \tau^\theta_{ \mu_2}$ means they are adjacent cubes of distance $\approx \theta$,  $\overline{\Xi}_{\theta_0}$  denotes the remaining pairs not included
in the first sum. Here the {\em{diagonal}} set $\overline{\Xi}_{\theta_0}$ is much larger than ${\Xi}_{\theta_0}$, and 
it is not hard to check that $ \,(\nu_1, \nu_2)\in
\overline{\Xi}_{\theta_0}$ implies $|\nu_1-\nu_2|\le 2^{23}\theta_0$.

By \eqref{org}, we have 
\begin{equation}\label{bn2'}
\begin{aligned}
I=\int&\Big(\int | \tilde \sigma_\la H \cdot
 \tilde \sigma_\la H|^{\frac{q_e-q}2} \, 
|\diag(H)|^{\frac{q}2} \, dx\Big)^{\frac{2}{q_e}}dt 
\\
&\lesssim 
\int\Big(\int | \tilde \sigma_\la H \cdot
 \tilde \sigma_\la H|^{\frac{q_e-q}2} \, 
|{\overline\Upsilon^{\text{diag}}}(H)|^{q/4} \, dx\Big)^{\frac{2}{q_e}}dt
\\
&\quad +\int\Big(\int | \tilde \sigma_\la H \cdot
\tilde \sigma_\la H|^{\frac{q_e-q}2} \, 
|{\overline\Upsilon^{\text{far}}}(H)|^{q/4} \, dx\Big)^{\frac{2}{q_e}}dt 
\\
&=A+B.
\end{aligned}
\end{equation}

For the diagonal term $A$, note that when $n-1=3$, $\frac{q_e}{4}\in [1,2]$, if we repeat the proof of Lemma~\ref{blemma}, it is not hard to show the following analog of \eqref{b7}
\begin{equation}\label{b77}
\|{\overline\Upsilon^{\text{diag}}}(H)\|^{\frac12}_{L^{\frac12}_tL_x^{\frac{q_e}{4}}}
\lesssim
\| \tilde \sigma_\la
\Atn H\|^2_{L^{2}_t\ell_\nu^{q_e}L^{q_e}_x}\,
+O(\la^{1-}\|H\|_{L^2_{t,x}}^2).
\end{equation}
To prove this, we just need to define the auxiliary  operator $b_{\nu_1, \nu_2}(x,D)$ in \eqref{p1} such that its frequency support is essentially the algebraic sum of the frequency support of  4 
nearby $A^{\theta_0}_{\nu}(x,D)$ operators, and the remaining arguments can be carried over in the same way.

By \eqref{b77}, 
if we use H\"older's inequality followed by Young's inequality
\begin{align*}
A=&
\int\Big(\int | \tilde \sigma_\la H \cdot
 \tilde \sigma_\la H|^{\frac{q_e-q}2} \, 
|{\overline\Upsilon^{\text{diag}}}(H)|^{\frac q4} \, dx\Big)^{\frac{2}{q_e}}dt \\
\le & \int\Big( \| \tilde \sigma_\la H \cdot
 \tilde \sigma_\la H\|_{L_x^{\frac{q_e}{2}}}^{\frac{q_e-q}2} \, 
\|{\overline\Upsilon^{\text{diag}}}(H)\|_{L_x^{\frac{q_e}{4}}}^{\frac q4} \,\Big)^{\frac{2}{q_e}}dt \\
\le &\| \tilde \sigma_\la H \cdot
 \tilde \sigma_\la H\|_{L^1_tL_x^{\frac{q_e}{2}}}^{\frac{q_e-q}{q_e}} \, 
\|{\overline\Upsilon^{\text{diag}}}(H)\|_{L^{\frac12}_tL_x^{\frac{q_e}{4}}}^{\frac{q}{2q_e}} \\
\le &\tfrac{q_e-q}{q_e}\| \tilde \sigma_\la H \|^2_{L^2_tL_x^{q_e}} +\tfrac{q}{q_e}
\|{\overline\Upsilon^{\text{diag}}}(H)\|^{\frac12}_{L^{\frac12}_tL_x^{\frac{q_e}{4}}}\\
\le &\tfrac{q_e-q}{q_e}\| \tilde \sigma_\la H \|^2_{L^2_tL_x^{q_e}} +C(
\| \tilde \sigma_\la
\Atn H\|^2_{L^{2}_t\ell_\nu^{q_e}L^{q_e}_x}\,
+\la^{1-}\|H\|_{L^2_{t,x}}^2).
\end{align*}
which is desired since the first term
in the right can be absorbed to the left
side of \eqref{b9}.

To control the off-diagonal term $B$, first we note that 
by the  Cauchy-Schwarz inequality, 
\begin{equation}\label{cauchy}
|T_{\nu_1} hT_{\nu_2} H|\le C\big(\sum_{\tilde\nu_1: \,|\tilde\nu_1-\nu_1|\le 2^{13}\theta_0}| \tilde\sigma_\la A^{\theta_0}_{\nu_1} H|^2\big)\big(\sum_{\tilde\nu_2: \,|\tilde\nu_2-\nu_2|\le 
 2^{13}\theta_0}|\tilde\sigma_\la A^{\theta_0}_{\nu_2}  H|^2\big).
\end{equation}
Since for fixed $\nu_1, \nu_2$, the number of choices of $\tilde\nu_1,\tilde\nu_2$ is finite, \eqref{cauchy} implies that 
\begin{equation}\label{b2555}
\begin{aligned}
  \sum_{\{(\mu_1, \mu_2): \, \tau^\theta_{\mu_1} 
\sim \tau^\theta_{ \mu_2}\}}&
\sum_{\{(\nu_1, \nu_2)\in
\tau^\theta_{\mu_1}\times \tau^\theta_{ \mu_2}\}} |T_{\nu_1} HT_{\nu_2} H| \\
&\lesssim  \sum_{\{(\mu_1, \mu_2): \, \tau^\theta_{\mu_1} 
\sim \tau^\theta_{ \mu_2}\}}
\sum_{\{(\nu_1, \nu_2)\in
\tilde\tau^\theta_{\mu_1}\times \tilde\tau^\theta_{ \mu_2}\}} |A^{\theta_0}_{\nu_1}\tilde\sigma_\la  H|^2\cdot|A^{\theta_0}_{\nu_2} \tilde\sigma_\la H|^2.
    \end{aligned}
\end{equation}
Here $\tilde{\tau}^\theta_{\mu_1}$ and $\tilde{\tau}^\theta_{ \mu_2}$ are the cubes with the same centers but $11/10$ times the
side length
of $\tau^\theta_{\mu_1}$ and $\tau^\theta_{ \mu_2}$, respectively, we used the fact that the side length of $\tau^\theta_{\mu_1}$ is $\ge 2^{20}\theta_0$, so $0.1*\text{side length} \gg 2^{13}\theta_0$.

Furthermore, we  have for a 
given fixed $c_0=2^{-m_0}$, $m_0\in {\mathbb N}$, and pair of dyadic cubes $\tau^\theta_{\mu_1}$, $\tau^\theta_{ \mu_2}$
with $\tau^\theta_{\mu_1} \sim \tau^\theta_{ \mu_2}$
and $\theta=2^k\theta_0$ 
\begin{multline}\label{ll2}
\sum_{(\nu_1, \nu_2)\in
\tilde\tau^\theta_{\mu_1}\times \tilde\tau^\theta_{ \mu_2}} 
|\tilde\sigma_\la A^{\theta_0}_{\nu_1} H|^2\cdot| \tilde\sigma_\la A^{\theta_0}_{\nu_2} H|^2
\\
=\sum_{(\nu_1, \nu_2)\in
\tilde\tau^\theta_{\mu_1}\times \tilde\tau^\theta_{ \mu_2}}  \, 
\sum_{\substack{\tau^{c_0\theta}_{\mu_1'} \cap \overline{\tau}^\theta_{\mu_1} \ne \emptyset
\\ \tau^{c_0\theta}_{ \mu_2'} \cap \overline{\tau}^\theta_{ \mu_2} \ne \emptyset}}
|\tilde\sigma_\la A^{c_0\theta}_{\mu_1'}A^{\theta_0}_{\nu_1} H|^2\cdot|\tilde\sigma_\la A^{c_0\theta}_{\mu_2'}A^{\theta_0}_{\nu_2}  H|^2
+O(\la^{-N}\|H\|_2^4),
 \end{multline}
if $\overline{\tau}^\theta_{\mu_1}$ and $\overline{\tau}^\theta_{\mu_2}$ the cubes with the same centers but $12/10$ times the
side length
of $\tau^\theta_{\mu_1}$ and $\tau^\theta_{\mu_2}$, respectively, so that we have
$\text{dist}(\overline{\tau}^\theta_{\mu_1}, \overline{\tau}^\theta_{ \mu_2})\ge \theta/2$ when
$\tau^\theta_{\mu_1}  \sim \tau^\theta_{ \mu_2}$.  
This follows from the fact that for $c_0$ small enough the product of the symbol of $A_{\mu_1'}^{c_0\theta}$ and $A_{\nu_1}^{\theta_0}$ 
vanishes identically if $\tau_{\mu_1'}^{c_0\theta}\cap \overline{\tau}^\theta_{\mu_1}=\emptyset$ and $\nu_1 \in \tilde\tau^\theta_{\mu_1}$, since
$\theta=2^k\theta_0$ with $k\ge 20$.  
Also notice that we then have 
for fixed $c_0=2^{-m_0}$ small enough
\begin{equation}\label{lsep}
\text{dist}(\tau^{c_0\theta}_{\mu'_1}, \tau^{c_0\theta}_{ \mu'_2})\in [4^{-1}\theta, 4^2  \theta],
\quad \text{if } \, \, \tau^{c_0\theta}_{\mu_1'} \cap \overline{\tau}^\theta_{\mu_1} \ne \emptyset, \, \, \,
\text{and } \, \, \tau^{c_0\theta}_{ \mu_2'} \cap \overline{\tau}^\theta_{ \mu_2} \ne \emptyset.
\end{equation}
Also, for each $\mu_1$ there are $O(1)$ terms $\mu_1'$  with $\tau^{c_0\theta}_{\mu_1'} \cap \overline{\tau}^\theta_{\mu_1} \ne \emptyset$,
if $c_0$ is fixed.

If the cubes $\tau^{c_0\theta}_{\mu'_1}$, $\tau^{c_0\theta}_{\mu'_2}$ satisfy the conditions above, we have the  following bilinear type estimate which is a consequence of  Proposition 3.3 in \cite{blair2023strichartz}. 
 \begin{equation}\label{ll3}
 \iint \bigl| \tilde \sigma_\la A^{c_0\theta}_{\mu_1}H_1 \cdot \tilde \sigma_\la A^{c_0\theta}_{ \mu_2}H_2\bigr|^{\frac q2} \,  dxdt
 \lesssim 
\,\la^{1+\e} \, \bigl(2^k\theta_0\la\bigr)^{\frac{n-1}2
(q-\frac{2(n+1)}{n-1}))} \|H_1\|^{q/2}_{L^2_{t,x}} \, \|H_2\|^{q/2}_{L^2_{t,x}}.
\end{equation}
The proof of \eqref{ll3} follows from using the Hadamard parametrix to rewrite the 
$\tilde \sigma_\la A^{c_0\theta}_{\nu}$, $\nu=\mu_1, \mu_2$ operators as  oscillatory integral operators with explicit kernels and then applying Lee's oscillatory integral estimate \cite{LeeBilinear}.
The assumption that $c_0$ is small enough is crucial to ensure the separation conditions in Theorem 1.1 of \cite{LeeBilinear}, see the proof of Proposition 3.3 in  \cite{blair2023strichartz} for more details.

Note that by \eqref{org} and \eqref{ll2}, we have 
\begin{equation}\label{org1}
\begin{aligned}
&\big|{\overline\Upsilon^{\text{far}}}(H) \big|
    \le \sum_{\{k\in {\mathbb N}: \, k\ge 20 \, \, \text{and } \, 
\theta=2^k\theta_0\ll 1\}} \,  \, 
\sum_{\{(\mu_1, \mu_2): \, \tau^\theta_{\mu_1}
\sim \tau^\theta_{ \mu_2}\}}
\sum_{(\nu_1, \nu_2)\in
\tilde\tau^\theta_{\mu_1}\times \tilde\tau^\theta_{ \mu_2}}  \, 
\sum_{\substack{\tau^{c_0\theta}_{\mu_1'} \cap \overline{\tau}^\theta_{\mu_1} \ne \emptyset
\\ \tau^{c_0\theta}_{ \mu_2'} \cap \overline{\tau}^\theta_{ \mu_2} \ne \emptyset}} \\
& \qquad\qquad\qquad\qquad\qquad\qquad \Big(
|\tilde\sigma_\la A^{c_0\theta}_{\mu_1'}A^{\theta_0}_{\nu_1} H|^2\cdot|\tilde\sigma_\la A^{c_0\theta}_{\mu_2'}A^{\theta_0}_{\nu_2}  H|^2\Big)
+O(\la^{-N}\|H\|_2^4),
\end{aligned}
\end{equation}
It is not hard to see that the number of terms in the sum is $O(\la^{(2n-3)\e_0})=O(\la^{5\e_0})$, and 
by \eqref{ll3}, we have for each term in the right side of \eqref{org1}, 
\begin{equation}\label{iii1}
\begin{aligned}
\int\Big(&\int | \tilde \sigma_\la H \cdot
\tilde \sigma_\la H|^{\frac{q_e-q}2} \, 
\Big||\tilde\sigma_\la A^{c_0\theta}_{\mu_1'}A^{\theta_0}_{\nu_1} H|^2\cdot|\tilde\sigma_\la A^{c_0\theta}_{\mu_2'}A^{\theta_0}_{\nu_2}  H|^2\Big|^{q/4} \, dx\Big)^{\frac{2}{q_e}}dt \\
 &\lesssim \|\tilde S_\la f\|_{L^\infty(A_-)}^{\frac{2(q_e-q)}{q_e}} \cdot 
\|\big(\tilde\sigma_\la A^{c_0\theta}_{\mu_1'}A^{\theta_0}_{\nu_1} H\big)\cdot\big(\tilde\sigma_\la A^{c_0\theta}_{\mu_2'}A^{\theta_0}_{\nu_2}  H\big)\|_{L^{\frac{q}{q_e}}_{t}L^{\frac q2}_x}^{\frac{q}{q_e}} \\
&\lesssim T^{(\frac{q_e}{q}-\frac{2}{q})\cdot\frac{q}{q_e}} \|\tilde S_\la f\|_{L^\infty(A_-)}^{\frac{2(q_e-q)}{q_e}} \cdot 
\|\big(\tilde\sigma_\la A^{c_0\theta}_{\mu_1'}A^{\theta_0}_{\nu_1} H\big)\cdot\big(\tilde\sigma_\la A^{c_0\theta}_{\mu_2'}A^{\theta_0}_{\nu_2}  H\big)\|_{L^{\frac{q}{2}}_{t}L^{\frac q2}_x}^{\frac{q}{q_e}},
\\
&\lesssim  T^{(\frac{q_e}{q}-\frac{2}{q})\cdot\frac{q}{q_e}} \la ^{(\frac{n-1}4+\e_1)(\frac{2(q_e-q)}{q_e})}
\cdot \Big(\la^{1+\e} \, \bigl(2^k\la^{1-\e_0}\bigr)^{\frac{n-1}2
(q-\frac{2(n+1)}{n-1})} \Big)^{\frac{2}{q_e}}\|H\|^{\frac{2q}{q_e}}_{L^2_{x}}.
\end{aligned}
\end{equation}
As in \eqref{far}, if we take $\e, \e_0$ and $\e_1$ to be small enough, e.g., $\e=\e_0=\e_1=\frac1{100}$  it is straightforward to check that the right side of \eqref{iii1} is 
$O(\la^{1-6\e_0}\|H\|_{L^2_{t,x}}^{2}),
$ using the fact that 
$\|H\|_{L^2_{t,x}}^{2}$ dominates 
$\|H\|_{L^2_{t,x}}^{\frac{2q}{q_e}}$ since we are 
 assuming $f$ is $L^2$ normalized.
This implies that 
\begin{equation}\label{bn2''}
\begin{aligned}
B=&\int\Big(\int | \tilde \sigma_\la H \cdot
\tilde \sigma_\la H|^{\frac{q_e-q}2} \, 
|{\overline\Upsilon^{\text{far}}}(H)|^{q/4} \, dx\Big)^{\frac{2}{q_e}}dt \\
\lesssim & \la^{1-6\e_0+5\e_0}\|H\|_{L^2_{t,x}}^{2},
\end{aligned}
\end{equation}
which finishes the proof of Proposition~\ref{locprop} for $n-1=3$.

\bigskip

\noindent{\bf Remarks about $n-1=2$}.
The arguments for $n-1=2$ is similar to the case $n-1=3$. Recall that when $n-1=3$, we are essentially doing another round of bilinear estimates  within the diagonal terms in the Whitney decomposition due to the fact that $q_e=\frac{2(n-1)}{n-3}=6>4$. When $n-1=2$, $q$ can be arbitrary large, if $q\in [2^{k+1}, 2^{k+2}]$ for some $k\in \mathbb{N}^+$, then one can repeat the above arguments  $k$ times,  the resulting diagonal term will involve a product of $2^{k+1}$ terms of type $\tilde \sigma_\la \Atn H$, and it satisfies the natural analog of Lemma~\ref{blemma} since $\frac{q}{2^{k+1}}\in [1,2]$. And there are off-diagonal terms each time we run the Whitney decomposition, those terms can be treated as in \eqref{iii1} by using bilinear estimates. The main difference is, as $q\rightarrow \infty$, unlike \eqref{far} and \eqref{iii1}, we have to take $\e_0$ and $\e_1$ to be small enough depending on $q$, instead of some fixed small constant.

\bigskip

\bibliography{refs}

\providecommand{\MR}[1]{}
\begin{thebibliography}{10}

\bibitem{Berard}
P.~H. B\'erard.
\newblock On the wave equation on a compact {R}iemannian manifold without
  conjugate points.
\newblock {\em Math. Z.}, 155(3):249--276, 1977.

\bibitem{BHSsp}
M.~D. Blair, X.~Huang, and C.~D. Sogge.
\newblock Improved spectral projection estimates.
\newblock {\em to appear in J. Eur. Math. Soc. (JEMS)}.

\bibitem{blair2023strichartz}
M.~D. Blair, X.~Huang, and C.~D. Sogge.
\newblock Strichartz estimates for the $\text{S}$chr\" odinger equation on
  negatively curved compact manifolds.
\newblock {\em arXiv:2304.05247}, 2023.

\bibitem{BlairSoggeRefined}
M.~D. Blair and C.~D. Sogge.
\newblock {Refined and microlocal {K}akeya-{N}ikodym bounds for eigenfunctions
  in two dimensions}.
\newblock {\em Anal. PDE}, 8(3):747--764, 2014.

\bibitem{blair2015refined}
M.~D. Blair and C.~D. Sogge.
\newblock Refined and microlocal {K}akeya-{N}ikodym bounds of eigenfunctions in
  higher dimensions.
\newblock {\em Comm. Math. Phys.}, 356(2):501--533, 2017.

\bibitem{BSTop}
M.~D. Blair and C.~D. Sogge.
\newblock Concerning {T}oponogov's theorem and logarithmic improvement of
  estimates of eigenfunctions.
\newblock {\em J. Differential Geom.}, 109(2):189--221, 2018.

\bibitem{SBLog}
M.~D. Blair and C.~D. Sogge.
\newblock Logarithmic improvements in {$L^p$} bounds for eigenfunctions at the
  critical exponent in the presence of nonpositive curvature.
\newblock {\em Invent. Math.}, 217(2):703--748, 2019.

\bibitem{BourgainBesicovitch}
J.~Bourgain.
\newblock {Besicovitch type maximal operators and applications to {F}ourier
  analysis}.
\newblock {\em Geom. Funct. Anal.}, 1(2):147--187, 1991.

\bibitem{bourgain1993fourier}
J.~Bourgain.
\newblock Fourier transform restriction phenomena for certain lattice subsets
  and applications to nonlinear evolution equations: Part $\text{I}$:
  Schr\"odinger equations.
\newblock {\em Geometric \& Functional Analysis GAFA}, 3(3):107--156, 1993.

\bibitem{BoDe}
J.~Bourgain and C.~Demeter.
\newblock The proof of the {$l^2$} decoupling conjecture.
\newblock {\em Ann. of Math.}, 182:351--389, 2015.

\bibitem{bgtmanifold}
N.~Burq, P.~G{\'e}rard, and N.~Tzvetkov.
\newblock {Strichartz inequalities and the nonlinear {S}chr{\"o}dinger equation
  on compact manifolds}.
\newblock {\em Amer. J. Math.}, 126(3):569--605, 2004.

\bibitem{DGG}
Y.~Deng, P.~Germain, and L.~Guth.
\newblock Strichartz estimates for the {S}chr\"{o}dinger equation on irrational
  tori.
\newblock {\em J. Funct. Anal.}, 273(9):2846--2869, 2017.

\bibitem{DGGM}
Y.~Deng, P.~Germain, L.~Guth, and S.~L. Rydin~Myerson.
\newblock Strichartz estimates for the {S}chr\"{o}dinger equation on
  non-rectangular two-dimensional tori.
\newblock {\em Amer. J. Math.}, 144(3):701--745, 2022.

\bibitem{HassellTacy}
A.~Hassell and M.~Tacy.
\newblock {Improvement of eigenfunction estimates on manifolds of nonpositive
  curvature}.
\newblock {\em Forum Mathematicum}, 27(3):1435--1451, 2015.

\bibitem{herr2023strichartz}
S.~Herr and B.~Kwak.
\newblock Strichartz estimates and global well-posedness of the cubic
  $\text{NLS}$ on $\mathbb{T}^2$.
\newblock {\em arXiv:2309.14275}, 2023.

\bibitem{HSSchro}
X.~Huang and C.~D. Sogge.
\newblock Quasimode and {S}trichartz estimates for time-dependent
  {S}chr\"odinger equations with singular potentials.
\newblock {\em Math Research Letters}, 29:727--762, 2022.

\bibitem{KT}
M.~Keel and T.~Tao.
\newblock Endpoint {S}trichartz estimates.
\newblock {\em Amer. J. Math.}, 120(5):955--980, 1998.

\bibitem{LeeBilinear}
S.~Lee.
\newblock {Linear and bilinear estimates for oscillatory integral operators
  related to restriction to hypersurfaces}.
\newblock {\em J. Funct. Anal.}, 241(1):56--98, 2006.

\bibitem{sogge88}
C.~D. Sogge.
\newblock {Concerning the {$L^p$} norm of spectral clusters for second-order
  elliptic operators on compact manifolds}.
\newblock {\em J. Funct. Anal.}, 77(1):123--138, 1988.

\bibitem{SFIO2}
C.~D. Sogge.
\newblock {\em Fourier integrals in classical analysis}, volume 210 of {\em
  Cambridge Tracts in Mathematics}.
\newblock Cambridge University Press, Cambridge, second edition, 2017.

\bibitem{sogge2015improved}
C.~D. Sogge.
\newblock {Improved critical eigenfunction estimates on manifolds of
  nonpositive curvature}.
\newblock {\em Math. Res. Lett.}, 24:549--570, 2017.

\bibitem{SoggeZelditchL4}
C.~D. Sogge and S.~Zelditch.
\newblock {On eigenfunction restriction estimates and {$L^4$}-bounds for
  compact surfaces with nonpositive curvature}.
\newblock In {\em {Advances in analysis: the legacy of {E}lias {M}. {S}tein}},
  volume~50 of {\em {Princeton Math. Ser.}}, pages 447--461. Princeton Univ.
  Press, Princeton, NJ, 2014.

\bibitem{TaoVargasVega}
T.~Tao, A.~Vargas, and L.~Vega.
\newblock A bilinear approach to the restriction and {K}akeya conjectures.
\newblock {\em J. Amer. Math. Soc.}, 11(4):967--1000, 1998.

\bibitem{TaylorPDO}
M.~E. Taylor.
\newblock {\em Pseudodifferential operators}.
\newblock Princeton Mathematical Series, No. 34. Princeton University Press,
  Princeton, N.J., 1981.

\end{thebibliography}
\bibliographystyle{abbrv}

%

\end{document}